\documentclass[preprint, 11pt]{elsarticle}

\usepackage[T1]{fontenc}
\usepackage[utf8]{inputenc}
\usepackage[hmargin=0.9in]{geometry}
\usepackage[babel,german=quotes]{csquotes}
\usepackage{subcaption}
\usepackage{graphicx}
\usepackage[colorlinks=true,citecolor=blue,urlcolor=blue]{hyperref}
\usepackage{caption}
\usepackage{amsmath}
\usepackage{amsthm}
\usepackage{amssymb}
\usepackage{bm, amsfonts}
\usepackage[dvipsnames]{xcolor}
\usepackage{soul,framed}
\usepackage{tikz}
\usepackage{bm}
\usepackage{makecell}
\usepackage{titlesec}
\usepackage{multirow}
\usepackage{indentfirst}
\usepackage{aligned-overset}


\newtheorem{theorem}{Theorem}[section]
\newtheorem{lemma}[theorem]{Lemma}

\theoremstyle{definition}

\theoremstyle{remark}
\newtheorem{remark}[theorem]{Remark}

\numberwithin{equation}{section}



\usepackage[notref,notcite,final]{showkeys} 

\newcommand{\vertiii}[1]{{\left\vert\kern-0.25ex\left\vert\kern-0.25ex\left\vert #1 
		\right\vert\kern-0.25ex\right\vert\kern-0.25ex\right\vert}}

\setcounter{secnumdepth}{4}
\colorlet{shadecolor}{green}

\titleformat{\paragraph}
{\normalfont\normalsize\itshape}{\theparagraph}{1em}{}
\titlespacing*{\paragraph}
{0pt}{3.25ex plus 1ex minus .2ex}{1.5ex plus .2ex}

\makeatletter
\def\ps@pprintTitle{%
	\let\@oddhead\@empty
	\let\@evenhead\@empty
	\def\@oddfoot{
		\footnotesize\itshape
		\ifx\@journal\@empty Elsevier
		\else\@journal\fi
		\hfill\today
	}%
	\let\@evenfoot\@oddfoot}
\makeatother

\begin{document}
	
	\begin{frontmatter}
	  \title{Strongly consistent low-dissipation WENO schemes for finite elements}

	        \author[TU]{Joshua Vedral\corref{cor1}}
		\ead{joshua.vedral@math.tu-dortmund.de}
		\author[LUT]{Andreas Rupp}
		\ead{andreas.rupp@fau.de}
		\cortext[cor1]{Corresponding author}
		\author[TU]{Dmitri Kuzmin}
		\ead{kuzmin@math.uni-dortmund.de}

	        \address[TU]{Institute of Applied Mathematics (LS III), TU Dortmund University\\ Vogelpothsweg 87, D-44227 Dortmund, Germany}
		\address[LUT]{School of Engineering Science, Lappeenranta--Lahti University of Technology\\ P.O. Box 20, 53851 Lappeenranta, Finland}
		
		\journal{Submitted to Applied Numerical Mathematics}
		
		\begin{abstract}
		  We propose a way to maintain strong consistency and facilitate error analysis in the context of dissipation-based WENO stabilization for continuous and discontinuous Galerkin discretizations of conservation laws. Following Kuzmin and Vedral (J. Comput. Phys. 487:112153, 2023) and Vedral (arXiv preprint arXiv:2309.12019), we use WENO shock detectors to determine appropriate amounts of low-order artificial viscosity. In contrast to existing WENO methods, our approach blends candidate polynomials using residual-based nonlinear weights. The shock-capturing terms of our stabilized Galerkin methods vanish if residuals do. This enables us to achieve improved accuracy compared to weakly consistent alternatives. As we show in the context of steady convection-diffusion-reaction (CDR) equations, nonlinear local projection stabilization terms can be included in a way that preserves the coercivity of local bilinear forms. For the corresponding Galerkin-WENO discretization of a CDR problem, we rigorously derive a priori error estimates. Additionally, we demonstrate the stability and accuracy of the proposed method through one- and two-dimensional numerical experiments for hyperbolic conservation laws and systems thereof. The numerical results for representative test problems are superior to those obtained with traditional WENO schemes, particularly in scenarios involving shocks and steep gradients.
		\end{abstract}
		
		\begin{keyword}
			hyperbolic conservation laws, continuous and discontinuous Galerkin methods, WENO scheme, residual-based nonlinear weights, consistency, a priori estimates
		\end{keyword}
		
	\end{frontmatter}
	\section{Introduction}
	It is well known that standard continuous Galerkin (CG) and discontinuous Galerkin (DG) discretizations of hyperbolic problems and convection-dominated convection-diffusion-reaction (CDR) problems suffer from spurious oscillations, instability, and convergence to wrong weak solutions in the nonlinear case. To address these challenges, numerous stabilized schemes have been proposed in the literature.
	
	Among the variety of stabilization techniques, local projection stabilization (LPS) methods \cite{braack2006, ganesan2008, knobloch2009, kuzmin2017, matthies2008, matthies2007} are a prominent  variant of variational multiscale (VMS) methods \cite{codina2018, hughes1998, john2006}. LPS methods address the limitations of residual-based stabilization techniques, like the streamline upwind Petrov-Galerkin (SUPG) methods \cite{brooks1982, burman2010}, i.e., the lack of symmetry and involving second-order derivatives in the residual due to the consistency property of the method. Initially introduced by Becker and Braack for the Stokes problem \cite{becker2001} and later extended to handle transport problems \cite{becker2004}, LPS methods are designed to stabilize standard Galerkin discretizations by controlling the fluctuations of gradients. This concept is akin to subgrid scale modeling \cite{ern2004,guermond1999,john2006,john2010} for large eddy simulation. Initial analysis of LPS was focused on low-order discretizations for the Oseen equations \cite{braack2006}, with further analysis conducted within the context of CDR equations; see, e.g., \cite{barrenechea2013, knobloch2009}.  

	High-order stabilization is generally insufficient to fully suppress spurious oscillations in the vicinity of steep gradients. To get rid of these oscillations in a way that preserves high-order accuracy in smooth regions, high-order baseline discretizations need to be equipped with nonlinear shock-capturing terms. The residual-based methods analyzed in \cite{knopp2002,lube2006a,lube2006} use streamline diffusion operators to achieve linear stability and nonlinear crosswind diffusion for shock-capturing purposes. The numerical analysis of such schemes is greatly facilitated by the independence and local coercivity of bilinear forms associated with the streamline and crosswind components of the stabilization terms.
       
  Alternatively, a blend of low-order monotone schemes and their high-order stabilized counterparts can be employed. This concept traces its origins back to the classical Jameson-Schmidt-Turkel (JST) scheme \cite{boris1973,jameson2017,jameson1981}. In this approach, first-order artificial viscosity is applied near discontinuities and replaced with high-order background dissipation in smoother regions. In the finite element context, the need for proper blending of low- and high-order stabilization terms was emphasized by Ern and Guermond \cite{ern2013}. Barrenechea et al. \cite{barrenechea2017c} analyzed a variable-order stabilized finite element method under the assumption that the nonlinear low-order diffusion operates only on a small fixed subdomain. Thus, they considered a linear scheme in which the switching parameter does not depend on the solution, but is solely a function of the space coordinate. Our work aims to extend this analysis, particularly considering the nonlinear coupling of high- and low-order stabilization terms. 

        Another prominent family of numerical methods achieves
        nonlinear stability by using special polynomial reconstructions. Many finite difference (FD), finite volume (FV) and DG schemes are based on the essentially non-oscillatory (ENO) paradigm developed by Harten et al. \cite{harten1987}. Rather than selecting the smoothest polynomial, Liu et al. \cite{liu1994} proposed to adaptively blend all polynomial approximations. The so-called weighted ENO (WENO) schemes ensure accurate representation of discontinuities, eliminate spurious oscillations and achieve high-order accuracy for smooth solutions. Qiu and Shu \cite{qiu2005} extended the WENO framework to Runge-Kutta discontinuous Galerkin (RKDG) methods by introducing a limiter based on WENO reconstructions. They further considered WENO schemes based on Hermite polynomials, termed Hermite WENO (HWENO) schemes \cite{qiu2004, qiu2005b}. By incorporating derivative values of neighboring elements into WENO reconstructions, the compactness of reconstruction stencils is optimized. For an overview of existing DG-WENO methods we refer the reader to \cite{shu2016, zhang2011}.

        The WENO approaches introduced in \cite{kuzmin2023a, vedral2023} belong to the family of dissipation-based shock-capturing techniques. However, they use a WENO-based smoothness indicator to construct a nonlinear blend of high-order and low-order stabilization terms. We analyze and improve such methods in the present paper. By using residual-based weights for candidate polynomials of a WENO reconstruction, we achieve excellent shock-capturing capabilities while retaining the optimal convergence rate of our baseline discretization method. Moreover, we perform rigorous theoretical studies of our nonlinear scheme. In particular, we prove stability, show existence of solutions to discrete problems, and obtain a priori estimates in the context of steady CDR equations. An important result of our analysis is an improved definition of the local stabilization operator. The proposed revision leads to a WENO scheme that consists of a linear LPS part and a locally coercive nonlinear component.
	
	We continue, in the next section, by presenting the generic form of stabilized CG and DG methods for hyperbolic conservation laws and systems thereof. In Section \ref{sec:stab2}, we consider a particular nonlinear blend of dissipative high-order and low-order stabilization terms. Following that, Section \ref{sec:res} introduces our new residual-based WENO scheme. Section \ref{sec:apriori} presents the results of our analysis for CG-WENO discretizations of steady CDR equations. Section \ref{sec:aprioriDG} extends them to the DG setting. In Section \ref{sec:num}, we conduct a series of numerical experiments for hyperbolic equations and systems thereof. In the last section, we draw conclusions and discuss some perspectives.
	
	\section{Stabilized Galerkin methods}
	\label{sec:stab}
	Let $u(\mathbf{x},t)\in \mathbb{R}^m$, $m\in \mathbb{N}$ be a vector of conserved quantities depending on the space location $\mathbf{x}$ and time instant $t\geq 0$. We consider the initial value problem
	\begin{subequations}
		\begin{alignat}{2}
		\frac{\partial u}{\partial t}+\nabla\cdot\mathbf{f}(u)&=0 \quad &&\text{in }\Omega \times \mathbb{R}_+, \label{eq:pde}\\
		u(\cdot,0)&=u_0 \quad &&\text{in } \Omega,
		\end{alignat}\label{eq:ivp}%
	\end{subequations}
	where $\Omega\in \mathbb{R}^d$, $d\in\{1,2,3\}$ is a bounded domain with Lipschitz boundary $\partial \Omega$, $\mathbf{f}(u)=(f_{ij})\in \mathbb{R}^{m\times d}$ is an array of inviscid fluxes with $(\nabla \cdot \mathbf{f})_i=\Big(\sum_{j=1}^{d}\frac{\partial f_{ij}}{\partial x_j}\Big)\in \mathbb{R}^{m}$ and $u_0\in\mathbb{R}^m$ is the initial datum. In the scalar ($m=1$) case, boundary conditions are imposed weakly at the inlet of the domain. For hyperbolic systems ($m>1$), appropriate choices of boundary conditions depend on the number of incoming and outgoing waves (see, e.g., \cite{guaily2013}).
	
	Let $\mathcal{T}_h=\{K_1,\ldots,K_{E_h}\}$ be a decomposition of the domain $\Omega$ into non-overlapping elements $K_e$, $e=1,\ldots,E_h$ such that $\cup_{K_e\in\mathcal{T}_h}=\bar{\Omega}$. We denote by $h=\max_{K_e\in\mathcal{T}_h}h_e$, where $h_e=\mathrm{diam}(K_e)$, the mesh size associated with $\mathcal{T}_h$. We discretize \eqref{eq:pde} in space using either the continuous (CG) or the discontinuous Galerkin (DG) finite element method. The corresponding finite element spaces read
	\begin{subequations}
	\begin{align*}
		V_h^c(\mathcal{T}_h)&=\{v_h\in C^0(\bar{\Omega}):v_h|_{K_e}\in V_h(K_e)\,\forall K_e\in\mathcal{T}_h\},\\
		V_h^d(\mathcal{T}_h)&=\{v_h\in L^2(\bar{\Omega}):v_h|_{K_e}\in V_h(K_e)\,\forall K_e\in\mathcal{T}_h\},
	\end{align*} 
	\end{subequations}
	respectively. Here, $V_h(K_e)\in \{\mathbb{P}_p(K_e), \mathbb{Q}_p(K_e)\}$ is the space of polynomials of (total) degree up to~$p$. For simplicity, we will write $V_h$ whenever possible, encompassing both $V_h^c$ and $V_h^d$. We seek an approximate solution
	\begin{align}
		u_h(\mathbf{x},t)=\sum_{j=1}^{N_h}u_j(t)\varphi_j(\mathbf x)
		\label{eq:sol}
	\end{align}
	in the finite element space $V_h$ spanned by basis functions $\varphi_1,\ldots,\varphi_{N_h}$. Our methodology presented below does not rely on a particular choice of basis functions. Popular choices include Lagrange, Bernstein and Legendre-Gauss-Lobatto (LGL) basis functions. 
	
	By inserting \eqref{eq:sol} into \eqref{eq:pde}, multiplying by a test function $w_h\in V_h$ and integrating over $\Omega$, we obtain a finite element discretization of \eqref{eq:ivp}. The semi-discrete problem corresponding to the standard continuous Galerkin method reads
	\begin{align}
	\sum_{e=1}^{E_h}\int_{K_e}w_h\Bigg(\frac{\partial u_h}{\partial t}+\nabla \cdot \mathbf{f}(u_h)\Bigg)\,\mathrm{d}\mathbf{x}=0\qquad \forall w_h\in V_h.
	\label{eq:galsd}
	\end{align}
  The DG version can be found in \cite{vedral2023}.
	For continuous finite elements, communication between elements is ensured via the continuous coupling of all elements. However, in the DG setting, the solution is typically discontinuous across element interfaces, necessitating the use of numerical flux functions. In our numerical experiments, we use the local Lax-Friedrichs numerical flux.
	
	To maintain generality, we introduce a local stabilization operator $s_h^e(\cdot,\cdot)$, which is yet to be defined. This operator allows for the representation of multiple schemes, such as LPS, SUPG, and others, through the following stabilized version of \eqref{eq:galsd}:
	\begin{align}
	\sum_{e=1}^{E_h}\int_{K_e}w_h\Bigg(\frac{\partial u_h}{\partial t}+\nabla \cdot \mathbf{f}(u_h)\Bigg)\,\mathrm{d}\mathbf{x}+\sum_{e=1}^{E_h}s_h^e(u_h,w_h)=0\qquad \forall w_h\in V_h.
	\end{align}
	The numerical solution can be evolved in time, e.g., using a strong stability preserving (SSP) Runge-Kutta method \cite{gottlieb2001}.
	\section{Dissipation-based nonlinear stabilization}
	\label{sec:stab2}
	In this section, we unify the stabilization techniques designed in \cite{kuzmin2023a} for continuous finite elements and in \cite{vedral2023} for their discontinuous counterparts in a broader Galerkin setting.

        In the DG context, numerical stability is guaranteed, e.g., for piecewise-constant (DG-$\mathbb{P}_0$) approximations with local Lax-Friedrichs fluxes. Obviously, the use of piecewise $\mathbb{P}_0$ discretizations is not an option for continuous Galerkin methods. To construct a low-order stabilization term that is suitable for CG and DG approaches alike,  we add isotropic artificial diffusion throughout the computational domain. The local stabilization operator that we use for this purpose is defined by \cite{kuzmin2023a,vedral2023} 
	\begin{align}
		s_h^{e,L}(u_h,w_h)=\nu_e\int_{K_e}\nabla w_h\cdot \nabla u_h\,\mathrm{d}\mathbf{x},
		\label{eq:lostab}
	\end{align}
	where
	\begin{align}\label{eq:nuLF}
		\nu_e=\frac{\lambda_eh_e}{2p}
	\end{align}
        is a Lax-Friedrichs-type artificial viscosity coefficient and $p$ denotes the polynomial degree. The involved parameter $\lambda_e=\|\mathbf{f}'(u_h)\|_{L^\infty(K_e)}$ is an upper bound for the maximum wave speed inside $K_e$. We refer the reader to \cite{guermond2016a} for estimates on the wave speeds for the Euler equations.
	
	Since stabilization of this kind is appropriate only in regions with steep gradients, we consider a symmetric version of the two-level VMS method presented by John et al. \cite{john2006}. To this end, we introduce a local fluctuation operator (as in the LPS methodology)
	\begin{subequations}
	\begin{alignat}{2}
		\kappa: \qquad &L^2(\Omega)^d\to L^2(\Omega)^d, \qquad &&v \mapsto v-P_hv, \label{eq:flucglobal}\\
		\kappa_e: \qquad &L^2(\Omega)^d\to L^2(K_e)^d, \qquad &&v \mapsto \kappa(v)|_{K_e}. 
	\end{alignat}\label{eq:fluctuation}%
	\end{subequations}
    Our choice of  $P_h:L^2(\Omega)\to V_h$ for continuous finite elements ($V_h=V_h^c$) is the (componentwise) Scott-Zhang variant \cite{scott1990} of the Cl\'ement operator \cite{clement1975}. In the DG ($V_h=V_h^d$) version, we define $P_h$ as the (componentwise) $L^2$ orthogonal projection. The design of the high-order stabilization term 
        	\begin{align}
        	\begin{split}
		s_h^{e,H}(u_h,w_h)&=\nu_e\int_{K_e}\kappa_e(\nabla w_h)\cdot \kappa_e(\nabla u_h)\,\mathrm{d}\mathbf{x}\\
		&=\nu_e\int_{K_e}(\nabla w_h - P_h\nabla w_h)\cdot (\nabla u_h -P_h\nabla u_h)\,\mathrm{d}\mathbf{x}
		\end{split}
		\label{eq:hostab}
	\end{align}
                builds on the concept of orthogonal subscale stabilization introduced by Codina and Blasco \cite{codina1997,codina2002}.

                The above choice of the projection operators is motivated by the analysis presented in Sec. \ref{sec:apriori}. Note that the high-order stabilization term \eqref{eq:hostab} vanishes in the DG case, in which the local $L^2$~projection yields $P_h\nabla u_h=\nabla u_h$ on $K_e$. In principle, we could project the gradient into the space $V_h^c$ using the Scott-Zhang operator also in the DG version, but there is no need for adding high-order stabilization in DG schemes equipped with stable numerical fluxes \cite{vedral2023}. In particular, the entropy stability \cite{jiang1994} and $L^2$ stability \cite{cockburn1998a} of such DG schemes can be shown in the scalar case. On the other hand, incorporating the bilinear form \eqref{eq:hostab} into the semi-discrete problem \eqref{eq:galsd} is essential for stability and high-order accuracy of CG approximations in regions where the solution is sufficiently smooth.

	Appropriate blending of the high-order stabilization term \eqref{eq:hostab} and the low-order stabilization term \eqref{eq:lostab} is needed to obtain stable and accurate numerical approximations. Introducing a blending factor $\gamma_e\in[0,1]$, our nonlinear stabilization can be written as (cf. \cite{barrenechea2017c,jameson2017,jameson1981,kuzmin2023a,vedral2023})
	\begin{align}
	\begin{split}
	s_h^e(u_h;u_h,w_h)&=\gamma_e(u_h)s_h^{e,H}(u_h,w_h)+(1-\gamma_e(u_h))s_h^{e,L}(u_h,w_h)\\
	&=\gamma_e(u_h)\nu_e\int_{K_e}\kappa_e(\nabla w_h)\cdot \kappa_e(\nabla u_h)\,\mathrm{d}\mathbf{x}+(1-\gamma_e(u_h))\nu_e\int_{K_e}\nabla w_h\cdot \nabla u_h\,\mathrm{d}\mathbf{x}.
	\end{split}
	\label{eq:blend}
	\end{align}
	Following the JST design principles, $\gamma_e$ should approach $0$ in cells with discontinuities and $1$ in cells where the solution is sufficiently smooth.
	
	There are several choices to consider for the blending factor. Traditional shock detectors rely on measures such as entropy \cite{guermond2011,krivodonova2004,lv2016}, total variation and slope \cite{ducros1999,harten1978,harten1983a,hendricks2018,jameson1981,ren2003} or (entropy) residual \cite{ern2013,marras2018,stiernstrom2021}. As an alternative, we consider smoothness sensors that utilize WENO-based reconstructions \cite{hill2004,li2020,movahed2013,visbal2005,wang2023,zhao2019,zhao2020}. Following our previous work \cite{kuzmin2023a,vedral2023}, we use
	\begin{align}\label{eq:gamma-old}
		\gamma_e (u_h)=1-\left(\min\Bigg\{1, \frac{\|u_h-u_h^{*}\|_e}{\|u_h\|_e}\Bigg\}\right)^q,
	\end{align}
	where $u_h^*$ is a WENO reconstruction and $q$ is a parameter that determines the sensitivity of the smoothness sensor $\gamma_e$ to the relative difference between $u_h$ and $u_h^*$. We measure this difference using the scaled Sobolev semi-norm \cite{friedrich1998,jiang1996}
	\begin{align}
		\|v\|_e=\Bigg(\sum_{1\leq |\mathbf{k}|\leq p}h_e^{2|\mathbf{k}|-d}\int_{K_e}|D^{\mathbf{k}}v|^2\,\mathrm{d}\mathbf{x}\Bigg)^{1/2} \qquad \forall v\in H^p(K_e),
		\label{eq:snorm}
	\end{align}
	where $\mathbf{k}=(k_1,\ldots,k_d)$ is the multiindex of the partial derivative
	\begin{align*}
		D^{\mathbf{k}}v=\frac{\partial^{|\mathbf{k}|}v}{\partial x_1^{k_1}\cdots \partial x_d^{k_d}}, \quad |\mathbf{k}|=k_1+\ldots+k_d.
	\end{align*}
	
	\begin{remark}
		Incorporating the scaling factors $h_e^{2|\mathbf{k}|-d}$ in \eqref{eq:snorm} is essential to prevent the leading coefficients of the highest-order derivatives from becoming dominant. The sensitivity of these coefficients can lead to strong oscillations in the solution. We refer to \cite{friedrich1998} for numerical experiments demonstrating the oscillations caused by omitting these scaling factors.
	\end{remark}
	\begin{remark}
		In DG schemes equipped with Hermite-type WENO limiters, the use of troubled cell indicators is a common strategy to minimize the computational cost associated with polynomial reconstructions; see, e.g., \cite{qiu2004,qiu2005b,qiu2005,zhu2016}. Among these indicators are minmod-type total variation bounded (TVB) limiters \cite{cockburn1989}, which flag cells as troubled whenever they induce a change in slope. Another prominent example is the KXRCF shock indicator developed in \cite{krivodonova2004}. However, these indicators often misidentify smooth cells, particularly those near smooth extrema, as troubled. To demonstrate that our methodology maintains high-order accuracy across all cells, we choose not to employ any troubled cell indicator.
	\end{remark}
	\section{Residual-based WENO scheme}
	\label{sec:res}
	
        
        The standard WENO averaging for finite volume \cite{friedrich1998,jiang1996} and discontinuous Galerkin \cite{luo2007,zhong2013,zhu2009,zhu2017} methods produces a convex combination $u_h^{*}$ of candidate polynomials. As a rule, smaller weights are assigned to oscillatory polynomials with large derivatives. However, this criterion does not generally prevent unnecessary modifications of the baseline finite element scheme, e.g., in situations when exact solutions belong to the finite element space and the method is consistent.
		

We explore the possibility of using residual-based weights to directly construct a consistent WENO approximation $u^{e,*}$ such that $u^{e,*}=u^e$ for a sufficiently smooth exact solution $u^e\in V_h(K_e)$. We found that the difficulty of theoretical studies for WENO schemes partially stems from the lack of this consistency property. The standard definition of nonlinear weights introduces a small consistency error, complicating the analysis of fully stabilized schemes.
	
	Exploiting the local nature of our stabilization technique, we draw upon the HWENO approach proposed by Qiu and Shu \cite{qiu2004} with some modifications. Unlike many HWENO schemes that construct HWENO approximations using only cell averages and first-order derivatives \cite{luo2007,qiu2004}, our approach incorporates all partial derivatives up to order $p$ for polynomial approximation. This eliminates the need for information about neighbors' neighbors, even for higher-order approximations.
	
	Let $K_e\in \mathcal{T}_h$ be a mesh cell and $u_h$ a finite element approximation. We define $\mathcal{S}^e$ as the integer set containing the indices $e'$ of all von Neumann neighboring cells $K_{e'}\in \mathcal{T}_h$. That is, an index $e'$ belongs to $\mathcal{S}^e$ if $K_e$ and $K_{e'}$ have a common boundary (a point in 1D, an edge in 2D, a face in 3D). Since $e\in \mathcal{S}^e$, we can define $\mathcal{S}_0^e:=\{e\}$ and $m_e\geq d+1$ reconstruction stencils $\mathcal{S}_l^e:=\{e,e'\}$, where $e'\in \mathcal{S}^e\setminus\{e\}$.    
		
	A standard HWENO reconstruction on $K_e$ can be written as 
	\begin{align}
	u_h^{e,*} = \sum_{l=0}^{m_e}\omega_l^eu_{h,l}^e\in \mathbb{P}_p(K_e),
	\end{align}
	where $\omega_l^e$ and $u_{h,l}^e$, $l=0,\ldots,m_e$ denote nonlinear weights and Hermite candidate polynomials, respectively. Following Zhong and Shu \cite{zhong2013}, we extend Galerkin polynomials $u_h^{e'}$ of neighboring cells $e'\in \mathcal{S}^e\setminus\{e\}$ into $K_e$. The candidate polynomials are then expressed as
	\begin{align}
		u_{h,l}^e(\mathbf{x})=u_h^{e'}(\mathbf{x})+\pi_e(u_h^e-u_h^{e'}), \qquad \mathbf{x}\in K_e,
	\end{align}
	where $\pi_ev=\frac{1}{|K_e|}\int_{K_e}v\,\mathrm{d}\mathbf{x}$ is the average value of $v\in\{u_h^e,u_h^{e'}\}$ in $K_e$. Equivalently, we have
	\begin{align*}
		\pi_e u_{h,l}^e=\pi_e u_h^e,\qquad D^{\mathbf{k}}u_{h,l}^e=D^{\mathbf{k}}u_h^{e'}, \qquad 1\leq|\mathbf{k}|\leq p.
	\end{align*}
	
	If the approximation $u_h$ varies smoothly on all cells with indices in $S^e$, linear weights $\tilde{\omega}_l^e\in[0,1]$ are assigned to the candidate polynomials $u_{h,l}^e$. Typically, small positive weights $\tilde{\omega}^e_l=10^{-3}$ are assigned to $u_{h,l}^e$, $l=1,\ldots,m_e$, while a large weight $\tilde{\omega}_0^e=1-\sum_{l=1}^{m_e}\tilde{\omega}_l^e$ is assigned to $u_{h,0}^e=u_h^e$ (see, e.g., \cite{jiang1996,zhong2013,zhu2017}). Since our smoothness indicator measures deviations from the WENO reconstruction, assigning a smaller linear weight to $u_{h,0}^e$ results in more reliable shock detection and stronger nonlinear stabilization.
	
	Since the use of linear weights may produce oscillatory reconstructions, nonlinear weights are introduced to measure the relative smoothness of candidate polynomials. The classical smoothness indicators $\beta_l^e$, as proposed by Jiang and Shu \cite{jiang1996} and Friedrich \cite{friedrich1998}, can be written as
	\begin{align*}
		\beta_l^e=\|u_{h,l}^e\|_e^q, \qquad q\geq1,
	\end{align*}
	where $\|\cdot\|_e$ is the semi-norm defined by \eqref{eq:snorm}. We recover the smoothness indicators by Jiang and Shu and Friedrich by using $q=2$ in 1D and $q=1$ in 2D, respectively.
	
	A popular definition of the nonlinear weights $\omega_l^e$ for a WENO scheme is then given by \cite{jiang1996,zhu2009,zhu2017}
	\begin{align}
	\omega_l^e=\frac{\tilde{w}_l^e}{\sum_{k=0}^{m_e}\tilde{w}_k^e}, \qquad \tilde{w}_l^e=\frac{\tilde{\omega}_l^e}{(\varepsilon+\beta_l^e)^r}.
	\label{eq:oldweights}
	\end{align}
	Here, $r$ is a positive integer and $\varepsilon$ is a small positive real number, which is added to avoid division by zero. Typically, the parameters are set to $r=2$ and $\varepsilon=10^{-6}$. Note that this definition of the nonlinear weights introduces a (small) consistency error in the stabilized weak form.
        
	To preserve strong consistency in the sense that $u_h^{e,*}=u_h^e$ for an exact solution $u^e=u|_{K_e}\in V_h(K_e)$, 
        we propose the following new definition of the nonlinear weights:
	\begin{align}
	\omega_l^e=\frac{\tilde{w}_l^e}{\sum_{k=0}^{m_e}\tilde{w}_k^e}, \qquad \tilde{w}_0^e=\frac{\tilde{\omega}_0^e(R^e+\delta)}{(\varepsilon+\beta_0^e)^r}, \qquad
	\tilde{w}_j^e=\frac{\tilde{\omega}_j^e\max\{R^e-\theta R^{e'},0\}}{(\varepsilon+\beta_j^e)^r} \qquad \forall j\in \{1,\ldots,m_e\},
	\label{eq:newweights}
	\end{align}
	where $\delta$ is a small positive number and $R^e$, $R^{e'}$, $e'\in \mathcal{S}^e\setminus \{e\}$ are element residuals ($R^e=\|\mathcal L u_h-g\|_{0,K_e}^2$ for a numerical solution $u_h$ of the localized PDE $\mathcal L u=g$). We set $\delta=\varepsilon=10^{-6}$. Additionally, we introduce a threshold parameter $\theta\in \mathbb{R}_0^+$ to control the sensitivity of $\tilde{w}_j^e$, $j=1,\ldots,m_e$ to the difference between $R^e$ and $R^{e'}$. Essentially, if the residual $R^{e'}$ is large, the nonlinear weight $\tilde{w}_j^e$, $j\in\{1,\ldots,m_e\}$ is set to zero. This approach is akin to the modified smoothness sensor proposed in \cite[Remark 4]{vedral2023} and to the ENO-like stencil selection procedure used in targeted ENO (TENO) schemes \cite{fu2016,fu2017a}.
	
	\begin{remark}
		When $\theta$ is set to 0 and the residuals $R^{e'}$, $e'\in S^e\setminus\{e\}$ are comparable in magnitude, our formulation of the nonlinear weights in \eqref{eq:newweights} closely resembles the conventional definition in \eqref{eq:oldweights}.
	\end{remark}
	
	With the new residual-based definition of the nonlinear weights in \eqref{eq:newweights}, we find that $\omega_0^e=1$ and $\omega_j^e=0$, $j=1,\ldots,m_e$ if $R^e=0$. Consequently, the HWENO reconstruction becomes $u^{e,*}=u^e$, leading to $\|u^{e,*}-u^e\|_{e}=0$ and thus $\gamma_e=1$. Ultimately, we obtain $(1-\gamma_e(u^e))s_h^{e,L}(u^e,w_h)=0$ in \eqref{eq:blend} for all $w_h\in V_h$.
	\medskip
        
	In summary, our arbitrary-order residual-based HWENO scheme ensures strong consistency for DG in the sense that $u^{e,*}=u^e$ and/or $\gamma_e=1$ whenever $R^e=0$. For continuous finite elements the consistency error can be bounded as $\kappa$ contains the difference between the gradients of the finite element solution and their interpolation.

	\section{A priori analysis for CG-WENO methods}
	\label{sec:apriori}
	Following the theoretical investigations of finite element methods stabilized by shock-capturing \cite{knopp2002,lube2006} and LPS \cite{barrenechea2017c,barrenechea2013} terms, we perform error analysis of dissipation-based WENO stabilization for CG discretizations of scalar convection-dominated transport problems.
	\subsection{Problem statement}
	Let $\Omega \subset \mathbb{R}^d$, $d\in\{1,2,3\}$ be a bounded domain with Lipschitz-continuous boundary $\partial \Omega$. We consider the steady-state convection-diffusion-reaction (CDR) equation
	\begin{align}
	-\varepsilon\Delta u+\mathbf{b}\cdot \nabla u+cu=g \quad \text{in } \Omega, \qquad u=u_D \quad \text{on } \partial\Omega,
	\label{eq:adr}
	\end{align}
	where $\varepsilon>0$ is a constant diffusion coefficient, $\mathbf{b}\in W^{1,\infty}(\Omega)^d$ is a given velocity field
	, $c\in L^{\infty}(\Omega)$ is a non-negative reaction rate, $g\in L^2(\Omega)$ is a given source term, and $u_D \in H^{\frac{1}{2}}(\partial \Omega)$ is a boundary datum. We assume that there exists a constant $\sigma_0$ such that 
	\begin{align}
	\sigma :=c-\frac{1}{2}\nabla\cdot \mathbf{b}\geq \sigma_0 > 0 \quad \text{in } \Omega.
	\label{eq:conduni}
	\end{align}
	This assumption guarantees unique solvability of problem \eqref{eq:adr}.
	
	The weak form of problem \eqref{eq:adr} reads: Find $u\in H^1(\Omega)$ such that $u=u_D$ on $\partial \Omega$ and
	\begin{align}
		a(u,v):=\varepsilon(\nabla u,\nabla v)_\Omega+(\mathbf{b}\cdot \nabla u,v)_\Omega+(cu,v)_\Omega=(g,v)_{\Omega} \qquad \forall v\in H_0^1(\Omega).
		\label{eq:condwf}
	\end{align}
	We introduce a function $u_{h,D}\in V_h$ such that its trace approximates the non-homogeneous Dirichlet boundary condition in \eqref{eq:adr}. The standard CG discretization of problem \eqref{eq:adr} is given by: Find $u_h\in V_h$ such that $u_h-u_{h,D}\in V_{h,0}=V_h\cap H_0^1(\Omega)$ and 
	\begin{align}
	a(u_h,v_h)=(g,v_h)_\Omega \qquad \forall v_h\in V_{h,0}.
	\label{eq:wfg}
	\end{align}
     Let $\Omega_e$ be the patch of elements containing $K_e$ and all elements that share a common vertex with~$K_e$. We define $\omega_e$ as the largest constant less than or equal to one such that the stability estimate 
	\begin{align}
	\|\kappa_e\nabla v\|_{0,K_e}\leq C\|\nabla v\|_{0,\Omega_e} \qquad \forall v\in H^1(\Omega_e)
	\label{eq:szconst}
	\end{align}
        holds for all $C\ge \frac{1}{\sqrt{\omega_e}}$. This constant is used solely in our analysis and does not need to be computed in numerical simulations.

        We can finally state the stabilized semi-discrete formulation of problem \eqref{eq:adr} as follows: Find $u_h\in V_h$ such that $u_h-u_{h,D}\in V_{h,0}$ and
	\begin{align}
		a(u_h,v_h)+s_h(u_h,v_h)&+d_h(u_h;u_h,v_h) =(g,v_h)_{\Omega} \qquad \forall v_h\in V_{h,0}.
		\label{eq:wfs}
	\end{align}
	Here, the bilinear form $s_h(\cdot,\cdot)$ is given by
	\begin{align}
		s_h(w_h,v_h):=\sum_{K_e\in\mathcal{T}_h} \omega_e\nu_e(\kappa_e\nabla v_h,\kappa_e\nabla w_h)_{K_e}\qquad \forall v_h,w_h\in V_h
		\label{eq:def_sh}
	\end{align}
	and the nonlinear form $d_h(\cdot;\cdot,\cdot)$ is given by 
	\begin{align}
	\begin{split}
		d_h(u_h;w_h,v_h) & :=\sum_{K_e\in\mathcal{T}_h}(1-\gamma_e(u_h))\nu_e\Big((\nabla v_h,\nabla w_h)_{\Omega_e}-\omega_e(\kappa_e\nabla v_h,\kappa_e \nabla w_h)_{K_e}\Big)\\
    & \qquad + \sum_{K_e\in\mathcal{T}_h} \gamma_e(u_h) \nu_e \underbrace{ (1 - \omega_e) }_{ \in [0, 1) } (\kappa_e\nabla v_h,\kappa_e \nabla w_h)_{K_e}
		\qquad \forall u_h,v_h,w_h\in V_h.
	\end{split}
	\label{eq:def_dh}
	\end{align}
	The artificial viscosity coefficient \eqref{eq:nuLF} is defined using the maximum speed $\lambda_e=\|\mathbf{b}\|_{L^\infty(K_e)}$. 
   
   	We remark that \eqref{eq:wfs} differs slightly from the nonlinear stabilization presented in \eqref{eq:blend}. The upcoming analysis will show the rationale for these modifications.
     \subsection{Stability}
     We prove the stability of our method \eqref{eq:wfs} w.r.t. the mesh-dependent norm \cite{barrenechea2017c,barrenechea2013}
     \begin{align}
     	\|v\|_S:=(\varepsilon|v|_{1,\Omega}^2+\sigma_0\|v\|_{0,\Omega}^2+s_h(v,v))^{1/2} \qquad \forall v \in H_0^1(\Omega).
     	\label{eq:norm}
     \end{align}
     The following simple but important lemma is the reason for using $s_h(u_h,v_h)+d_h(u_h;u_h,v_h)$ instead of the original stabilization term \eqref{eq:blend} in the method under investigation.
     \begin{lemma}
     	The stabilized scheme \eqref{eq:wfs} is coercive, i.e.,
     	\begin{align}
     		a(v,v)+s_h(v,v)+d_h(w;v,v)\geq \|v\|_S^2 \qquad \forall v\in H_0^1(\Omega),w\in H^1(\Omega),
     		\label{eq:coerc}
     	\end{align}
     	and it is bounded in the sense that there is a constant $\hat{C}>0$ such that
     	\begin{align*}
     		a(v,w)+s_h(v,w)+d_h(u;v,w)\leq \hat{C}\|v\|_S\|w\|_S \qquad \forall u,v,w\in H^1(\Omega).
     	\end{align*}
     	\label{lemma:coerc}
     \end{lemma}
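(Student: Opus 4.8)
The plan is to treat the two assertions separately: coercivity follows from a term-by-term sign analysis, while boundedness is obtained by observing that, for each frozen argument $u$, the form $d_h(u;\cdot,\cdot)$ is symmetric and positive semi-definite, so that a generalized Cauchy--Schwarz inequality applies.

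For coercivity I would first isolate $a(v,v)$. Since $v\in H_0^1(\Omega)$, integrating the convective term by parts gives $(\mathbf{b}\cdot\nabla v,v)_\Omega=-\tfrac12(\nabla\cdot\mathbf{b}\,v,v)_\Omega$, the boundary contribution vanishing because $v$ has zero trace. Combining this with the reaction term and invoking the hypothesis \eqref{eq:conduni} that $\sigma=c-\tfrac12\nabla\cdot\mathbf{b}\geq\sigma_0>0$ yields
\begin{align*}
a(v,v)=\varepsilon|v|_{1,\Omega}^2+(\sigma v,v)_\Omega\geq\varepsilon|v|_{1,\Omega}^2+\sigma_0\|v\|_{0,\Omega}^2 .
\end{align*}
Comparing with \eqref{eq:norm}, it then suffices to prove $d_h(w;v,v)\geq0$. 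Inspecting \eqref{eq:def_dh} with $w_h=v_h=v$, both sums are manifestly non-negative: in the second sum the factors $\gamma_e\in[0,1]$, $\nu_e\geq0$, $1-\omega_e\in[0,1)$ and $\|\kappa_e\nabla v\|_{0,K_e}^2\geq0$ are each non-negative, while every summand of the first sum is non-negative precisely because the defining property \eqref{eq:szconst} of $\omega_e$ gives $\omega_e\|\kappa_e\nabla v\|_{0,K_e}^2\leq\|\nabla v\|_{0,\Omega_e}^2$. Adding $s_h(v,v)$ then reproduces $\|v\|_S^2$ exactly.

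For boundedness I would dispatch $a$ and $s_h$ by routine estimates: the three terms of $a(v,w)$ are controlled after using $\sqrt\varepsilon|v|_{1,\Omega}\leq\|v\|_S$, $\sqrt{\sigma_0}\|w\|_{0,\Omega}\leq\|w\|_S$ together with $\|\mathbf{b}\|_{L^\infty(\Omega)},\|c\|_{L^\infty(\Omega)}<\infty$, while $s_h(v,w)\leq s_h(v,v)^{1/2}s_h(w,w)^{1/2}\leq\|v\|_S\|w\|_S$ since $s_h$ is itself symmetric positive semi-definite. The crux is $d_h$. For every fixed $u$ the map $(v,w)\mapsto d_h(u;v,w)$ is symmetric (each factor in \eqref{eq:def_dh} is a symmetric inner product) and, by the coercivity step, positive semi-definite, so the Cauchy--Schwarz inequality for semi-definite forms gives $d_h(u;v,w)\leq d_h(u;v,v)^{1/2}d_h(u;w,w)^{1/2}$. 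It remains only to bound the diagonal $d_h(u;v,v)\leq C\|v\|_S^2$ uniformly in $u$: discarding the negative contribution in the first sum and using $\gamma_e,1-\gamma_e\leq1$, the diagonal is dominated by $\sum_e\nu_e\|\nabla v\|_{0,\Omega_e}^2+\sum_e\nu_e(1-\omega_e)\|\kappa_e\nabla v\|_{0,K_e}^2$, to which \eqref{eq:szconst} is applied once more.

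The main obstacle is this final estimate of the diagonal, which hinges on two mesh-geometric facts I would state explicitly. First, the patches $\Omega_e$ have finite overlap, so that $\sum_e\|\nabla v\|_{0,\Omega_e}^2\leq N|v|_{1,\Omega}^2$ for a shape-regularity constant $N$; second, a uniform lower bound $\omega_e\geq\omega_{\min}>0$ (a consequence of the Scott--Zhang stability together with an inverse inequality) keeps the factor $\tfrac{1-\omega_e}{\omega_e}$ produced by \eqref{eq:szconst} bounded. With $\nu_e\leq\nu_{\max}$ this reduces everything to $|v|_{1,\Omega}^2\leq\varepsilon^{-1}\|v\|_S^2$ and delivers a constant $\hat C$ depending only on $\varepsilon,\sigma_0,\|\mathbf{b}\|_{L^\infty(\Omega)},\|c\|_{L^\infty(\Omega)},\nu_{\max},\omega_{\min}$ and $N$. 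I expect the bookkeeping of the patch overlap to be the only genuinely non-trivial point, everything else being sign-checking and Cauchy--Schwarz.
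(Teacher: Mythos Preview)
Your argument is correct and follows the same route as the paper: coercivity via integration by parts for $a$, the sign of $d_h(w;v,v)$ coming from the defining inequality \eqref{eq:szconst} for $\omega_e$, and boundedness via Cauchy--Schwarz together with $1-\gamma_e\in[0,1]$. Your treatment of boundedness is in fact considerably more explicit than the paper's one-line remark---the observations about finite patch overlap, the uniform lower bound on $\omega_e$, and the resulting $\varepsilon$-dependence of $\hat C$ are exactly the bookkeeping that the paper suppresses.
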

     \begin{proof}
     	The coercivity of linear terms in \eqref{eq:wfs} follows by integration of parts and \eqref{eq:conduni}. Using the stability property \eqref{eq:szconst} of the fluctuation operator $\kappa_e$, we infer that
     	\begin{align*}
     		d_h(w;v,v)\geq Ch\sum_{K_e\in\mathcal{T}_h}\Big(\|\nabla v\|_{\Omega_e}^2-\omega_e\|\kappa_e\nabla v\|_{K_e}^2\Big)\geq 0.
     	\end{align*}
     	Boundedness follows directly from the definition of the bilinear form in \eqref{eq:def_sh} and the fact that $1-\gamma_e(u)\in[0,1]$ for all $u\in H^1(\Omega)$.
     \end{proof}
     
     \subsection{Existence of solutions}
     We proceed by recalling a consequence of Brouwer's fixed point theorem.
     \begin{lemma}
     	Let $X$ be a finite-dimensional Hilbert space with inner product $(\cdot,\cdot)_X$ and norm $\|\cdot\|_X$. Let $M:X\to X$ be a continuous mapping and $K>0$ a real number such that 
     	\begin{align*}
     		(Mx,x)_X>0 \qquad \forall x\in X \; \text{s.t. } \|x\|_X=K.
     	\end{align*}
     	Then there exists at least one $x\in X$ such that $\|x\|_X< K$ and $Mx=0$.
     	\label{lemma:consbrouwer}
     \end{lemma}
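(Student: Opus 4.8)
The plan is to prove the statement via Brouwer's fixed point theorem, following the classical no-retraction argument. Write $\bar B_K := \{x \in X : \|x\|_X \le K\}$ for the closed ball, which is a nonempty, compact, convex subset of the finite-dimensional space $X$ and is therefore homeomorphic to a standard Euclidean closed ball. I would argue by contradiction, assuming that $Mx \ne 0$ for every $x \in \bar B_K$.

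Under this assumption the map
\begin{align*}
g \colon \bar B_K \to \bar B_K, \qquad g(x) := -K\,\frac{Mx}{\|Mx\|_X}
\end{align*}
is well defined, since the denominator never vanishes, and continuous, since $M$ is continuous and $x \mapsto \|Mx\|_X$ is continuous and bounded away from zero on the compact set $\bar B_K$. As $g$ maps $\bar B_K$ continuously into itself, Brouwer's fixed point theorem yields a point $x^\ast \in \bar B_K$ with $g(x^\ast) = x^\ast$. Because $\|g(x)\|_X = K$ for all $x$, this fixed point satisfies $\|x^\ast\|_X = K$; in particular it lies on the sphere, where the hypothesis applies.

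It then remains to evaluate the inner product along the fixed point. Using $x^\ast = g(x^\ast)$,
\begin{align*}
(Mx^\ast, x^\ast)_X = \left(Mx^\ast,\, -K\frac{Mx^\ast}{\|Mx^\ast\|_X}\right)_X = -K\,\|Mx^\ast\|_X < 0,
\end{align*}
which contradicts the assumed positivity $(Mx,x)_X > 0$ for all $x$ with $\|x\|_X = K$. Hence the standing assumption fails, so $M$ must vanish at some $x \in \bar B_K$. Any such zero must in fact satisfy $\|x\|_X < K$, because if $\|x\|_X = K$ the hypothesis would force $(Mx,x)_X > 0$ and thus $Mx \ne 0$. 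This delivers the claim.

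As for difficulty, there is essentially no genuine obstacle beyond bookkeeping. The only points requiring care are verifying that $g$ is well defined and continuous, which is precisely where the contradiction hypothesis $Mx \ne 0$ enters, and observing that the Brouwer fixed point is automatically pushed onto the boundary sphere, so that the positivity hypothesis can legitimately be invoked. The finite-dimensionality of $X$ is essential throughout, as it is what makes $\bar B_K$ compact and renders Brouwer's theorem applicable.
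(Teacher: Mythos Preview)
Your argument is correct and is precisely the classical Brouwer-based contradiction proof; the paper does not spell out a proof but simply refers to \cite[Chapter~II, Lemma~1.4]{temam1977}, where exactly this argument appears. So you have reproduced the cited proof rather than taken a different route.
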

     \begin{proof}
     	A proof can be found in \cite[Chapter II, Lemma 1.4]{temam1977}.
     \end{proof}
     The solvability of \eqref{eq:wfs} can be shown using the following theorem.
     \begin{theorem}\label{TH:existence}
     	If the mapping \begin{align}
     		v_h \mapsto \gamma(v_h)
     		\label{eq:condex}
     	\end{align}
     	is continuous for all $K_e\in \mathcal{T}_h$, then there exists a solution $u_h$ of \eqref{eq:wfs}.
     \end{theorem}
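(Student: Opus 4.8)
The plan is to apply the consequence of Brouwer's fixed point theorem recorded in Lemma~\ref{lemma:consbrouwer}. First I would reduce to homogeneous boundary data by writing $u_h = u_{h,D} + w_h$ with unknown $w_h \in V_{h,0}$, and equip the finite-dimensional space $X := V_{h,0}$ with the inner product
\begin{align*}
(v,w)_S := \varepsilon(\nabla v,\nabla w)_\Omega + \sigma_0 (v,w)_\Omega + s_h(v,w),
\end{align*}
whose induced norm is precisely the norm $\|\cdot\|_S$ from \eqref{eq:norm}. Since $s_h$ is a symmetric positive semi-definite bilinear form and the remaining part dominates a full norm on $H_0^1(\Omega)$, this is a genuine inner product, so $X$ is a finite-dimensional Hilbert space. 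Using the Riesz representation theorem I would then define a map $M : X \to X$ by
\begin{align*}
(Mw_h, v_h)_S := a(u_{h,D}+w_h, v_h) + s_h(u_{h,D}+w_h, v_h) + d_h(u_{h,D}+w_h; u_{h,D}+w_h, v_h) - (g,v_h)_\Omega
\end{align*}
for all $v_h \in X$. By construction, a root $Mw_h = 0$ corresponds exactly to a solution $u_h = u_{h,D}+w_h$ of \eqref{eq:wfs}.

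Next I would verify the continuity of $M$. The forms $a$ and $s_h$ are continuous bilinear forms on the finite-dimensional space, and the coefficients $\nu_e$ are constants, since $\lambda_e = \|\mathbf{b}\|_{L^\infty(K_e)}$ is independent of $u_h$ in the CDR setting. The only nonlinear dependence on $w_h$ enters through the blending factors $\gamma_e$ in $d_h$; because $d_h(\cdot;\cdot,\cdot)$ is otherwise a finite sum of products of the scalars $\gamma_e$ with bilinear forms in its last two arguments, the hypothesis that each map $v_h \mapsto \gamma_e(v_h)$ is continuous immediately yields continuity of $M$.

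The heart of the argument is the coercivity estimate producing the sign condition of Lemma~\ref{lemma:consbrouwer}. I would expand each form by bilinearity in its last two slots, e.g.\ $d_h(u_h; u_{h,D}+w_h, w_h) = d_h(u_h; w_h, w_h) + d_h(u_h; u_{h,D}, w_h)$, and likewise for $a$ and $s_h$. The diagonal contributions combine to $a(w_h,w_h) + s_h(w_h,w_h) + d_h(u_h; w_h, w_h) \geq \|w_h\|_S^2$ by the coercivity half of Lemma~\ref{lemma:coerc}; crucially, that bound holds for an \emph{arbitrary} first argument $u_h \in H^1(\Omega)$, which is exactly what accommodates the nonlinear dependence of $d_h$ on $u_h = u_{h,D}+w_h$. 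The remaining terms are linear in $w_h$ and, using the boundedness half of Lemma~\ref{lemma:coerc} together with Cauchy--Schwarz and $\sigma_0\|w_h\|_{0,\Omega}^2 \le \|w_h\|_S^2$, are bounded by $C\|w_h\|_S$ with $C := \hat{C}\|u_{h,D}\|_S + \sigma_0^{-1/2}\|g\|_{0,\Omega}$. This gives
\begin{align*}
(Mw_h, w_h)_S \geq \|w_h\|_S^2 - C\|w_h\|_S = \|w_h\|_S\bigl(\|w_h\|_S - C\bigr).
\end{align*}
Choosing $K > C$ makes $(Mw_h,w_h)_S > 0$ for all $w_h$ with $\|w_h\|_S = K$, so Lemma~\ref{lemma:consbrouwer} yields a $w_h$ with $\|w_h\|_S < K$ and $Mw_h = 0$, i.e.\ a solution $u_h$ of \eqref{eq:wfs}.

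I expect the main obstacle to be the bookkeeping that isolates the nonlinear first argument of $d_h$: one must invoke the coercivity bound with the full nonlinear argument $u_{h,D}+w_h$ yet still extract a clean quadratic lower bound in $w_h$ alone, which is precisely the reason Lemma~\ref{lemma:coerc} was formulated with a decoupled, arbitrary first slot. A secondary point to confirm is that $(\cdot,\cdot)_S$ is positive definite on $V_{h,0}$, so that the finite-dimensional Hilbert-space hypotheses of Lemma~\ref{lemma:consbrouwer} are genuinely satisfied.
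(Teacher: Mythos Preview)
Your proposal is correct and follows essentially the same route as the paper's proof: reduce to homogeneous boundary data, define the nonlinear map via the full stabilized form, invoke the coercivity and boundedness of Lemma~\ref{lemma:coerc} to obtain $(Mw_h,w_h)_S \ge \|w_h\|_S(\|w_h\|_S - C)$, and conclude with Lemma~\ref{lemma:consbrouwer}. Your version is slightly more explicit about the Hilbert-space structure and the continuity of $M$, and you record the sharper constant $\sigma_0^{-1/2}$ (the paper writes $\sigma_0^{-1}$), but the argument is the same.
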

     \begin{proof}
     	We assume the continuity of \eqref{eq:condex} for all $K_e\in \mathcal{T}_h$.  Let $M:V_{h,0}\to[V_{h,0}]'$ be defined by 
     	\begin{align*}
     		\big<Mw_h,v_h\big>:=a(w_h+u_{h,D},v_h)+s_h(w_h+u_{h,D},v_h)+d_h(w_h+u_{h,D};w_h+u_{h,D},v_h)-(g,v_h)_\Omega.
     	\end{align*}
     	Setting $w_h=v_h$, we obtain
     	\begin{align*}
  		\begin{split}
   		\big<Mv_h,v_h\big>&=a(v_h,v_h)+s_h(v_h,v_h)+d_h(v_h+u_{h,D};v_h,v_h)\\&+a(u_{h,D},v_h)+s_h(u_{h,D},v_h)+d_h(v_h+u_{h,D};u_{h,D},v_h)-(g,v_h)_\Omega \\
     	&\geq \|v_h\|_S^2-\hat{C}\|u_{h,D}\|_S\|v_h\|_S-\|g\|_{0,\Omega}\|v_h\|_{0,\Omega},
     	\end{split} 
     	\end{align*}
     	where the inequality uses Lemma \ref{lemma:coerc} (twice) and the Cauchy-Schwarz inequality. Since we clearly have that $\|v_h\|_{0,\Omega}\leq \frac{1}{\sigma_0}\|v_h\|_S$, we can further estimate
     	\begin{align}
     		\big<Mv_h,v_h\big>\geq \|v_h\|_S\Bigg(\|v_h\|_S-\hat{C}\|u_{h,D}\|_S-\frac{1}{\sigma_0}\|g\|_\Omega\Bigg),
     	\end{align}
     	which is positive if $v_h$ is chosen so that $\|v_h\|_S$ is large enough. Lemma \ref{lemma:consbrouwer} implies the result.
     \end{proof}
     \begin{remark}
     	Uniqueness of the solution was already discussed in \cite{knopp2002} in the context of stabilized finite element methods with shock capturing. To apply Banach's fixed point theorem, it is necessary to ensure the Lipschitz continuity of \eqref{eq:condex} for a particular choice of the smoothness sensors  $\gamma_e$. However, this requirement appears overly stringent \cite{lube-pc}. An alternative approach could involve proving a uniqueness result for Brouwer's fixed point theorem analogously to the corresponding result for Schauder's fixed point theorem \cite{kellogg1976}. We remark that such an approach also imposes restrictive assumptions on $\gamma_e$ \cite{lube-pc}.
     \end{remark}
    \subsection{Preliminaries}
	We assume that functions belonging to $V_h$ satisfy the local inverse inequality 
	\begin{align}
		|v_h|_{1,K_e}\leq Ch_e^{-1}\|v_h\|_{0,K_e} \qquad \forall v_h\in V_h, K_e\in \mathcal{T}_h.
		\label{eq:invineq}
	\end{align}
	We further suppose that, for a given $h>0$, we have $h_e\leq Ch$ for all $K_e\in \mathcal{T}_h$.
	Let $\mathbf{b}_e\in \mathbb{R}^d$ be a constant vector such that 
	\begin{align}
		|\mathbf{b}_e|\leq \|\mathbf{b}\|_{0,\infty,K_e}\qquad \text{and} \qquad \|\mathbf{b}-\mathbf{b}_e\|_{0,\infty,K_e}\leq Ch_e|\mathbf{b}|_{1,\infty,K_e} \qquad \forall K_e\in \mathcal{T}_h,
		\label{eq:velpre}
	\end{align}
	where $|\cdot|$ denotes the Euclidean norm in $\mathbb{R}^d$. Here, $\mathbf{b}_e$ can be chosen, e.g., as a nodal value of $\mathbf{b}$ in $K_e$ or as the $L^2$ projection of $\mathbf{b}$ into the space of functions that are piecewise constant on $K_e$.

	\subsection{A priori estimates}
	We begin by proving an optimal error estimate for the linear high-order scheme. The derivation of this estimate follows the analysis of LPS approaches in \cite{barrenechea2017c,barrenechea2013}. 
	\begin{theorem}
		Let $u\in H^{k+1}(\Omega)$ for some $k\in\{1,\ldots,p\}$, the mesh family be regular, and let $u_h$ be the solution to \eqref{eq:wfs} without nonlinear stabilization, i.e., set $d_h(u_h;u_h,v_h)=0$ for all $v_h\in V_{h,0}$. Then there is a constant $C>0$, independent of $\varepsilon$ and the mesh size $h$, such that 
		\begin{align}
		\|u-u_h\|_{S}\leq C[\varepsilon^{1/2}+\min(1,\sqrt{h/\varepsilon})]h^k|u|_{k+1,\Omega}.
		\label{eq:estho}
		\end{align}
	\end{theorem}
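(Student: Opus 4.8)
The plan is to run a standard LPS-type energy argument: split the error through a quasi-interpolant, invoke the coercivity of the linear part, and exploit the weak consistency of the stabilization. First I would fix the Scott--Zhang interpolation operator $I_h\colon H^{k+1}(\Omega)\to V_h$ with the optimal local bounds $\|u-I_hu\|_{0,K_e}+h_e|u-I_hu|_{1,K_e}\le Ch_e^{k+1}|u|_{k+1,\Omega_e}$, and decompose $u-u_h=\eta+e_h$ with $\eta:=u-I_hu$ and $e_h:=I_hu-u_h\in V_{h,0}$. Since $d_h$ is switched off, the coercivity half of Lemma~\ref{lemma:coerc} yields $\|e_h\|_S^2\le a(e_h,e_h)+s_h(e_h,e_h)$, so it suffices to control the right-hand side.

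Next I would set up the error equation. The exact solution satisfies $a(u,v_h)=(g,v_h)_\Omega$ for all $v_h\in V_{h,0}\subset H_0^1(\Omega)$, while $u_h$ solves $a(u_h,v_h)+s_h(u_h,v_h)=(g,v_h)_\Omega$; subtracting gives the weak-consistency identity $a(u-u_h,v_h)+s_h(u-u_h,v_h)=s_h(u,v_h)$. Testing with $v_h=e_h$ and inserting the splitting gives $a(e_h,e_h)+s_h(e_h,e_h)=s_h(u,e_h)-a(\eta,e_h)-s_h(\eta,e_h)$, so the task reduces to estimating these three terms against $\|e_h\|_S$.

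For the consistency term I would use the fluctuation operator's approximation property: since $P_h$ reproduces polynomials of degree $p$, one has $\|\kappa_e\nabla u\|_{0,K_e}=\|\nabla u-P_h\nabla u\|_{0,K_e}\le Ch_e^{k}|u|_{k+1,\Omega_e}$, and with $\omega_e\nu_e\le Ch_e$ a cellwise Cauchy--Schwarz step gives $s_h(u,e_h)\le Ch^{k+1/2}|u|_{k+1,\Omega}\|e_h\|_S$. The term $s_h(\eta,e_h)$ is handled identically, now using the stability bound \eqref{eq:szconst} to get $\|\kappa_e\nabla\eta\|_{0,K_e}\le C|\eta|_{1,\Omega_e}$ and hence the same order $h^{k+1/2}$. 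In $a(\eta,e_h)=\varepsilon(\nabla\eta,\nabla e_h)+(\mathbf b\cdot\nabla\eta,e_h)+(c\eta,e_h)$ the diffusive part is bounded by $\varepsilon^{1/2}|\eta|_1\cdot\varepsilon^{1/2}|e_h|_1\le C\varepsilon^{1/2}h^k|u|_{k+1}\|e_h\|_S$ (this produces the $\varepsilon^{1/2}$ contribution), and the reactive part by $\|c\|_\infty\|\eta\|_0\|e_h\|_0\le Ch^{k+1}|u|_{k+1}\|e_h\|_S$, which is of higher order.

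The convective term is the crux, and it is where the factor $\min(1,\sqrt{h/\varepsilon})$ must be produced by combining two complementary bounds. The direct estimate $(\mathbf b\cdot\nabla\eta,e_h)\le\|\mathbf b\|_\infty|\eta|_1\|e_h\|_0\le Ch^k|u|_{k+1}\|e_h\|_S$ yields the factor $1$. Alternatively, since $e_h\in H_0^1(\Omega)$, integration by parts gives $(\mathbf b\cdot\nabla\eta,e_h)=-(\eta,(\nabla\cdot\mathbf b)e_h)-(\eta,\mathbf b\cdot\nabla e_h)$, and bounding the second summand by $\|\mathbf b\|_\infty\|\eta\|_0\,\varepsilon^{-1/2}(\varepsilon^{1/2}|e_h|_1)\le C\varepsilon^{-1/2}h^{k+1}|u|_{k+1}\|e_h\|_S$ yields, via $\varepsilon^{-1/2}h^{k+1}\le\sqrt{h/\varepsilon}\,h^k$ for $h\le1$, the factor $\sqrt{h/\varepsilon}$ (the first summand is again $O(h^{k+1})$). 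Taking the smaller of the two bounds gives $C\min(1,\sqrt{h/\varepsilon})h^k|u|_{k+1}\|e_h\|_S$. Collecting all contributions, dividing by $\|e_h\|_S$, and adding the interpolation part $\|\eta\|_S$—whose constituents $\varepsilon^{1/2}|\eta|_1=O(\varepsilon^{1/2}h^k)$, $\sigma_0^{1/2}\|\eta\|_0=O(h^{k+1})$ and $s_h(\eta,\eta)^{1/2}=O(h^{k+1/2})$ are all dominated by the right-hand side of \eqref{eq:estho}—yields the claim through $\|u-u_h\|_S\le\|\eta\|_S+\|e_h\|_S$. I expect the main obstacle to be the $\varepsilon$-uniform treatment of the convective term: one must carry the $\varepsilon$-weights carefully so that $C$ stays independent of $\varepsilon$ and $h$, while keeping the weakly-consistent stabilization error $s_h(u,e_h)$ at higher order through the polynomial-reproduction property of $P_h$.
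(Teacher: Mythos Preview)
Your proposal is correct and follows essentially the same LPS-type energy argument as the paper: the same interpolant splitting, the same coercivity step for the discrete error, the same weak-consistency identity $a(u-u_h,v_h)+s_h(u-u_h,v_h)=s_h(u,v_h)$, and the same dual treatment of the convective term (direct bound versus integration by parts) to produce the factor $\min(1,\sqrt{h/\varepsilon})$. The only cosmetic differences are your choice of the Scott--Zhang operator for $I_h$ (the paper uses the standard Lagrange interpolant) and a sign convention ($e_h=I_hu-u_h$ versus the paper's $\varrho=u_h-I_hu$), neither of which affects the argument.
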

	\begin{proof}
		Let $I_h:C(\bar{\Omega})\to V_h$ be the standard interpolation operator. We define $\eta:=u-I_hu$ and $\varrho:=u_h-I_hu$ such that $u-u_h=\eta-\varrho$. We have
		\begin{align}
		\|\eta\|_{S}^2\leq\varepsilon|u-I_hu|_{1,\Omega}^2+\sigma_0\|u-I_hu\|_{0,\Omega}^2+s_h(u-I_hu,u-I_hu).
		\label{eq:ie1}
		\end{align}
		The first two terms satisfy
		\begin{align}
		\varepsilon|u-I_hu|_{1,\Omega}^2\leq C\varepsilon h^{2k}|u|_{k+1,\Omega}^2 \quad \text{and} \quad \sigma_0\|u-I_hu\|_{0,\Omega}^2\leq C\sigma_0h^{2k+2}|u|_{k+1,\Omega}^2.
		\label{eq:ie2}
		\end{align}
		Let $\nu^{\max}=\max_{K_e\in\mathcal{T}_h}\nu_e$. Then, we can estimate 
		\begin{align}
			s_h(u-I_hu,u-I_hu)\leq \nu^{\max}\|\kappa(\nabla u - \nabla I_hu)\|_{0,\Omega}^2\leq C \nu^{\max}\|\nabla(u-I_hu)\|_{0,\Omega}^2\leq Ch^{2k+1}|u|_{k+1,\Omega}^2,
			\label{eq:ie3}
		\end{align}
		since the Scott-Zhang operator is $L^2$ stable. Putting \eqref{eq:ie1}, \eqref{eq:ie2} and \eqref{eq:ie3} together, we conclude that 
		\begin{align}
		\|\eta\|_{S}\leq C(\varepsilon^{1/2}+h^{1/2})h^k|u|_{k+1,\Omega}. 
		\label{eq:auxho}
		\end{align}
 By definition of $\|\cdot\|_S$, the contribution of $\varrho$ can be written as
		\begin{align*}
			\|\varrho\|_S&=\frac{a(I_hu-u_h,\varrho)+s_h(I_hu-u_h,\varrho)}{\|\varrho\|_S}\\&=\frac{a(I_hu-u,\varrho)+s_h(I_hu-u,\varrho)}{\|\varrho\|_S}+\frac{a(u-u_h,\varrho)+s_h(u-u_h,\varrho)}{\|\varrho\|_S}=:(I)+(II).
		\end{align*}
		We first derive a bound for (I). We have
		\begin{align*}
		\varepsilon(\nabla(I_hu-u),\nabla v_h)_{\Omega}\leq \varepsilon|I_hu-u|_{1,\Omega}|v_h|_{1,\Omega}\leq C\varepsilon^{1/2}h^k|u|_{k+1,\Omega}\varepsilon^{1/2}|v_h|_{1,\Omega} \leq C\varepsilon^{1/2}h^k|u|_{k+1,\Omega}\|v_h\|_{S}
		\end{align*}
		and 
		\begin{align*}
		(c(I_hu-u),v_h)_{\Omega}\leq c\|I_hu-u\|_{0,\Omega}\|v_h\|_{0,\Omega}\leq Ch^{k+1}|u|_{k+1,\Omega}\|v_h\|_{0,\Omega}\leq Ch^{k+1}|u|_{k+1,\Omega}\|v_h\|_{S}.
		\end{align*}
		The convective part can be estimated by
		\begin{align*}
			(\mathbf{b}\cdot\nabla(I_hu-u_h),v_h)_{\Omega}\leq d \|\mathbf{b}\|_{0,\infty,\Omega}\|\nabla(I_hu-u_h)\|_{0,\Omega}\|v_h\|_{0,\Omega}\leq C h^{k}|u|_{k+1,\Omega}\|v_h\|_{S}.
		\end{align*}
		However, a possibly sharper estimate can be obtained by applying integration by parts such that
		\begin{align*}
		(\mathbf{b}\cdot\nabla(I_hu-u_h),v_h)_{\Omega}=-((\nabla \cdot \mathbf{b})(I_hu-u),v_h)_{\Omega}-(I_hu-u,\mathbf{b}\cdot \nabla v_h)_{\Omega}.
		\end{align*}
		A bound for the first term is given by 
		\begin{align}
		\begin{split}
		((\nabla \cdot \mathbf{b})(I_hu-u),v_h)_{\Omega}\leq \|(\nabla \cdot \mathbf{b})(I_hu-u)\|_{0,\Omega}\|v_h\|_{0,\Omega}\leq Ch^{k+1}|u|_{k+1,\Omega}\|v_h\|_{S}.
		\end{split}
		\label{eq:conv1}
		\end{align}
		The boundedness of the convective field \eqref{eq:velpre} yields
		\begin{align}
			(I_hu-u,\mathbf{b}\cdot \nabla v_h)_{\Omega}\leq d\|\mathbf{b}\|_{0,\infty,\Omega}\|I_hu-u\|_{0,\Omega}\|\nabla v_h\|_{0,\Omega}\leq Ch^{k+1}|u|_{k+1,\Omega}\frac{\|v_h\|_{S}}{\varepsilon^{1/2}}.
		\label{eq:conv2}	
		\end{align} 
		Putting \eqref{eq:conv1} and \eqref{eq:conv2} together, we obtain 
		\begin{align*}
		(\mathbf{b}\cdot\nabla(I_hu-u_h),v_h)_{\Omega}\leq C\frac{1}{\min(1,\varepsilon^{1/2})}h^{k+1}|u|_{k+1,\Omega}\|v_h\|_{S}.
		\end{align*}
		The stabilization term can be estimated by 
		\begin{align*}
			s_h(I_hu-u,\varrho)\leq \nu^{\max}\|\kappa\nabla(I_hu-u)\|_{0,\Omega}\|\kappa\nabla \varrho \|_{0,\Omega}\leq Ch^{k+1}|u|_{k+1,\Omega}\|\varrho\|_S.
		\end{align*}
		Subtracting \eqref{eq:wfs} from \eqref{eq:condwf} yields the approximate Galerkin orthogonality relation
		\begin{align*}
			a(u-u_h,v_h)=s_h(u_h,v_h) \qquad \forall v_h\in V_h.
		\end{align*}
		Thus, $(II)=s_h(u,\varrho)/\|\varrho\|_S$.
		We have
		\begin{align*}
			s_h(u,u)\leq Ch \sum_{K_e\in \mathcal{T}_h}\|\kappa_e\nabla u_h\|_{0,K_e}^2\leq Ch^{2k+1}|u|_{k+1,\Omega}^2
		\end{align*}		
		and finally
		\begin{align*}
			s_h(u,\varrho)\leq \sqrt{s_h(u,u)}\sqrt{s_h(\varrho,\varrho)}\leq Ch^{k+1/2}|u|_{k+1,\Omega}\|\varrho\|_S.
		\end{align*}
		Combining the above auxiliary results proves the validity of \eqref{eq:estho}.
	\end{proof}
	We continue with proving an a priori error estimate for the fully stabilized version \eqref{eq:wfs}.
	\begin{theorem}\label{TH:full_stab}	
	Let $u\in H^{k+1}(\Omega)$ for some $k\in\{1,\ldots,p\}$, the mesh family be regular, and let $u_h$ be the solution to \eqref{eq:wfs}. Then there are constants $C_1,C_2>0$, independent of $\varepsilon$ and the mesh size $h$, such that 
	\begin{align}
		\|u-u_h\|_{S}\leq C_1[\varepsilon^{1/2}+\min(1,\sqrt{h/\varepsilon})]h^k|u|_{k+1,\Omega}+C_2h^{1/2}|u|_{ 1,\Omega}.
	\end{align}
	\end{theorem}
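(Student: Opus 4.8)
The plan is to follow the structure of the preceding theorem verbatim for all linear contributions and to isolate the single new ingredient, namely the nonlinear form $d_h$. I keep the splitting $u-u_h=\eta-\varrho$ with $\eta=u-I_hu$ and $\varrho=u_h-I_hu$. The bound on $\|\eta\|_S$ is identical to \eqref{eq:auxho} and needs no change, since $\eta$ does not interact with the stabilization operator $d_h$. Subtracting \eqref{eq:wfs} from \eqref{eq:condwf} now produces the modified orthogonality relation
\begin{align*}
a(u-u_h,v_h)=s_h(u_h,v_h)+d_h(u_h;u_h,v_h)\qquad\forall v_h\in V_{h,0},
\end{align*}
so the only term not already treated before is the nonlinear one.

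For the estimate of $\varrho$ I would not divide by $\|\varrho\|_S$ as in the linear case, but instead exploit the coercivity of Lemma \ref{lemma:coerc} in the sharpened form $a(v,v)+s_h(v,v)+d_h(w;v,v)\ge\|v\|_S^2+d_h(w;v,v)$, which is immediate from its proof because $a(v,v)\ge\varepsilon|v|_{1,\Omega}^2+\sigma_0\|v\|_{0,\Omega}^2$ while the $s_h$-parts coincide. Writing $\varrho=u_h-I_hu$ in both arguments and invoking the discrete equation together with $a(u,\varrho)=(g,\varrho)_\Omega$, I obtain
\begin{align*}
\|\varrho\|_S^2+d_h(u_h;\varrho,\varrho)\le a(u-I_hu,\varrho)+s_h(u-I_hu,\varrho)-s_h(u,\varrho)-d_h(u_h;I_hu,\varrho).
\end{align*}
The first three terms on the right are bounded exactly as in the linear theorem, producing the factor $C[\varepsilon^{1/2}+\min(1,\sqrt{h/\varepsilon})]h^k|u|_{k+1,\Omega}+Ch^{k+1/2}|u|_{k+1,\Omega}$ multiplied by $\|\varrho\|_S$. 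For the remaining term I would use that, with fixed first argument, $d_h(u_h;\cdot,\cdot)$ is a symmetric nonnegative bilinear form (by \eqref{eq:def_dh} and the stability \eqref{eq:szconst} of $\kappa_e$), so Cauchy--Schwarz followed by Young's inequality gives $|d_h(u_h;I_hu,\varrho)|\le\tfrac12 d_h(u_h;I_hu,I_hu)+\tfrac12 d_h(u_h;\varrho,\varrho)$, and the $\varrho$-part is absorbed into the left-hand side.

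It then remains to bound the genuinely new quantity $d_h(u_h;I_hu,I_hu)$, and this is where the extra, $\varepsilon$-independent term is born. Since every factor $1-\gamma_e,\gamma_e,\omega_e,1-\omega_e$ lies in $[0,1]$ and $\kappa_e$ is $L^2$-stable, I bound $d_h(u_h;I_hu,I_hu)\le C\sum_{K_e}\nu_e\|\nabla I_hu\|_{0,\Omega_e}^2$. Using $\nu_e\le Ch$ from \eqref{eq:nuLF}, the finite overlap of the patches $\Omega_e$ guaranteed by shape regularity, and the $H^1$-stability of the interpolant, this is at most $Ch|I_hu|_{1,\Omega}^2\le Ch|u|_{1,\Omega}^2$. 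Crucially, here I avoid the lossy conversion $|\varrho|_{1,\Omega}\le\varepsilon^{-1/2}\|\varrho\|_S$ precisely by keeping $d_h(u_h;\varrho,\varrho)$ on the left, and this is the reason the coefficient comes out $\varepsilon$-independent. Collecting everything yields a quadratic inequality of the form $\|\varrho\|_S^2\le A\|\varrho\|_S+Ch|u|_{1,\Omega}^2$, whence $\|\varrho\|_S\le A+Ch^{1/2}|u|_{1,\Omega}$, and the triangle inequality with the $\|\eta\|_S$ bound finishes the proof.

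The main obstacle I anticipate is precisely the treatment of $d_h(u_h;\cdot,\cdot)$: because its first argument is the unknown solution $u_h$, it cannot be controlled through consistency or interpolation error alone. The resolution is to exploit only two structural facts, namely the symmetry and nonnegativity of the form in its last two slots and the $O(h)$ size of $\nu_e$, so that the dependence on the unknown $u_h$ drops out after the Cauchy--Schwarz/absorption step and leaves a term depending on data through $I_hu$ only. This is also what forces the suboptimal rate $h^{1/2}|u|_{1,\Omega}$ of the shock-capturing contribution.
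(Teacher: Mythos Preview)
Your proposal is correct and follows essentially the same route as the paper's own proof: the same splitting $u-u_h=\eta-\varrho$, the same sharpened coercivity inequality $\|\varrho\|_S^2+d_h(u_h;\varrho,\varrho)\le a(\eta,\varrho)+s_h(\eta,\varrho)-s_h(u,\varrho)-d_h(u_h;I_hu,\varrho)$, the same Cauchy--Schwarz/Young absorption of $d_h(u_h;I_hu,\varrho)$ (the paper uses constants $1$ and $\tfrac14$ rather than your $\tfrac12$ and $\tfrac12$, which is immaterial), and the same crude bound $d_h(u_h;I_hu,I_hu)\le Ch|u|_{1,\Omega}^2$ that produces the $h^{1/2}|u|_{1,\Omega}$ term. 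Your remark that keeping $d_h(u_h;\varrho,\varrho)$ on the left is what makes the constant $\varepsilon$-independent captures exactly the mechanism the paper relies on.
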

	\begin{proof}
		We follow the proof presented in \cite[Theorem 3.14]{barrenechea2013}.
		
		Let $I_h:C(\bar{\Omega})\to V_h$ be the standard interpolation operator. We define $\eta:=u-I_hu$ and $\varrho:=u_h-I_hu$ such that $u-u_h=\eta-\varrho$. We estimate $\|\eta\|_S$ using \eqref{eq:auxho} and the fact that
		\begin{align*}
			d_h(u_h;u-I_hu,u-I_hu)\leq Ch\sum_{K_e\in\mathcal{T}_h}\|\nabla(u-I_hu)\|_{0,K_e}^2\leq Ch^{2k+1}|u|_{k+1,\Omega}^2.
		\end{align*}By \eqref{eq:condwf} and \eqref{eq:wfs}, we have
		\begin{align*}
			&a(\varrho,\varrho)+s_h(\varrho,\varrho)+d_h(u_h;u_h,\varrho)\\&=a(u_h,\varrho)+s_h(u_h,\varrho)+d_h(u_h;u_h,\varrho)-a(I_hu,\varrho)-s_h(I_hu,\varrho)\\&=a(\eta,\varrho)+s_h(\eta,\varrho)-s_h(u,\varrho).
		\end{align*}
		Using definition \eqref{eq:norm} of $\|\cdot\|_S$ and \eqref{eq:coerc}, we obtain
		\begin{align*}
			&\|\varrho\|_S^2+d_h(u_h;\varrho,\varrho)\\&\leq a(u_h,\varrho)+s_h(u_h,\varrho)+d_h(u_h;u_h,\varrho)-a(I_hu,\varrho)-s_h(I_hu,\varrho)-d_h(u_h;I_hu,\varrho)\\&=a(u,\varrho)-a(I_hu,\varrho)-s_h(I_hu,\varrho)-d_h(u_h;I_hu,\varrho)\\&=a(\eta,\varrho)+s_h(\eta,\varrho)-s_h(u,\varrho)-d_h(u_h;I_hu,\varrho).
		\end{align*}
		The first three terms can be estimated using \eqref{eq:estho}. To bound the nonlinear term, we use the coercivity of the nonlinear form in \eqref{eq:coerc}. The application of Hölder's and Young's inequalities yields
		\begin{align*}
			d_h(u_h;I_hu,\varrho)\leq\sqrt{d_h(u_h;I_hu,I_hu)}\sqrt{d_h(u_h;\varrho,\varrho)}\leq d_h(u_h;I_hu,I_hu)+\frac{1}{4}d_h(u_h;\varrho,\varrho),
		\end{align*}
where 
		\begin{align}
		\begin{split}
			d_h(u_h;I_hu,I_hu)\leq \sum_{K_e\in\mathcal{T}_h}(1-\gamma_e(u_h))\nu_e\|\nabla I_hu\|_{0,K_e}^2\leq Ch\sum_{K_e\in \mathcal{T}_h}\|\nabla I_hu\|_{0,K_e}^2\leq Ch|u|_{1,\Omega}^2.
		\end{split}
		\label{eq:nonlest}
		\end{align}
                It follows that
		\begin{align*}
			\|\varrho\|_S^2+d_h(u_h;\varrho,\varrho)\leq C_1^2[\varepsilon^{1/2}+\min(1,\sqrt{h/\varepsilon})]^2h^{2k}|u|_{k+1,\Omega}^2+C_2^2h|u|_{1,\Omega}^2,
		\end{align*}
		which completes the proof.
	\end{proof}
	\begin{remark}\label{REM:higher_conv}
		In \eqref{eq:nonlest}, it was shown that $\sqrt{d_h(u_h;I_hu,I_hu)}=\mathcal{O}(h^{1/2})$ in the worst case. However, this a priori estimate is not sharp as it neglects the actual dependence of the blending factor $1-\gamma_e(u_h)$ on the mesh size $h$. In fact, assuming that
		\begin{align*}
			\|u_h-u_h^*\|_e\leq Ch^{2k/q}\|u_h\|_e
		\end{align*}
                for all $K_e\in\mathcal T_h$,
		we obtain the optimal a posteriori error estimate $\sqrt{d_h(u_h;I_hu,I_hu)}=\mathcal{O}(h^{k+1/2})$.
	\end{remark}

\renewcommand{\vec}{\mathbf}
\newcommand*{\avg}[1]{\ensuremath{\{\![ #1 ]\!\}}}
\newcommand*{\jump}[1]{\ensuremath{[\![ #1 ]\!]}}
\newcommand*{\upw}[1]{\ensuremath{[ #1 ]^\uparrow}}
\newcommand*{\ds}{\ensuremath{\; \textup d\sigma}}
\newcommand*{\dx}{\ensuremath{\; \textup d \vec x}}
\section{A priori analysis for DG-WENO methods}
\label{sec:aprioriDG}
Let us transfer the results of Section \ref{sec:apriori} to the discontinuous Galerkin (DG) setting, in which they, in fact, hold with even sharper bounds. To that end, we approximate the solution $u \in H^2(\Omega)$ of \eqref{eq:adr} subject to \eqref{eq:conduni} by the broken polynomial $u_h \in V_h^d(\mathcal T_h)$ that satisfies
\begin{equation}\label{EQ:dg_scheme}
 a(u_h, v_h) + d_h(u_h; u_h, v_h) = \ell_h(v_h)
\end{equation}
for all $v_h \in V_h^d(\mathcal T_h)$. Here, the DG bilinear form $a_h$ is defined as
\begin{align*}
 a(v_h, w_h) & := \varepsilon \left[ \sum_{K_e \in \mathcal T_h} (\nabla w_h, \nabla v_h)_{K_e} - \sum_{F \in \mathcal F} \int_F \avg{\nabla w_h} \cdot \jump{v_h} + \avg{\nabla v_h} \cdot \jump{w_h} - \frac{\eta}{h_F} \jump{w_h} \cdot \jump{v_h} \ds \right] \\
 & \qquad - \sum_{K_e \in \mathcal T_h} ( v_h, \vec b \cdot \nabla w_h )_{K_e} + \sum_{F \in \mathcal F} \int_F \upw{\vec b v_h} w_h \ds + \left((c - \nabla \cdot \vec b) v_h, w_h \right)_\Omega,
\end{align*}
where $\mathcal F$ denotes the set of faces of the mesh $\mathcal T_h$ and $\eta > 0$ is a sufficiently large stabilization parameter. For faces $F \in \mathcal F$ belonging to two adjacent mesh elements $K^+, K^- \in \mathcal T_h$, the averaging, jump, and upwind operators take the form
\begin{gather*}
 \avg{\nabla v_h}|_F = \tfrac12 (\nabla v_h|_{K^+} + \nabla v_h|_{K^-}), \qquad \qquad \jump{ v_h }|_F = v_h|_{K^+} \vec n_{K^+} + v_h|_{K^-} \vec n_{K^-},\\
 \upw{\vec b v_h}(x) = \begin{cases} (\vec b \cdot \vec n_{K^+}) v_h|_{K^+} & \text{ if } \vec b \cdot \vec n_{K^+} \ge 0, \\ (\vec b \cdot \vec n_{K^-}) v_h|_{K^-} & \text{ otherwise.}
 \end{cases}
\end{gather*}
If the face $F \subset \partial \Omega$ is adjacent to $K^+ \in \Omega$, the above definitions still hold if the expression $v_h|_{K^-}$ is replaced by $v_h|_{K^+}$ in the average and by zero in the jump. The stabilization operator $d_h$ is defined as in \eqref{eq:blend}. Unlike the CG case, no modifications to the nonlinear stabilization are necessary. However, since we replace the Scott--Zhang interpolation by the $L^2$ orthogonal projection onto $V^d_h(\mathcal T_h)$ in the definition of the fluctuation operator \eqref{eq:fluctuation}, we can immediately conclude that $\kappa (\nabla v_h) = 0$ for all $v_h \in V_h^d(\mathcal T_h)$, and thus
\begin{equation*}
 d_h(u_h; v_h, w_h) = \sum_{K_e \in \mathcal T_h} (1 - \gamma_e(u_h)) \nu_e \int_{K_e} \nabla w_h \cdot \nabla v_h \dx.
\end{equation*}
Finally, the linear form $\ell_h$ consistently incorporates the right-hand side $g$ and boundary data $u_\textup D$. Let us define the DG counterpart of the $\| \cdot \|_S$ norm via
\begin{equation*}
 \| v_h \|^2_\textup{DG} := \varepsilon \left[ \sum_{K_e \in \mathcal T_h} \| \nabla v_h \|^2_{0,K_e} + \sum_{F \in \mathcal F} \tfrac1{h_F} \jump{v_h} |^2_F \right] + \sigma_0 \| v_h \|^2_{0,\Omega}.
\end{equation*}

\begin{lemma}\label{LEM:disc_coercivity}
 Assume that $\eta > 0$ is sufficiently large (a precise criterion is given in \cite[Lemma\ 4.12]{di-pietro2012}). Then there is a constant $C_\eta$, independent of $\varepsilon$ and the mesh, such that
 \begin{equation*}
  a(v_h, v_h) + d_h(w_h; v_h, v_h) \ge C_\eta \| v_h \|^2_\textup{DG} \qquad \forall v_h, w_h \in V_h^d(\mathcal T_h).
 \end{equation*}
\end{lemma}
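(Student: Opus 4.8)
The plan is to reduce the claim to the coercivity of the linear DG form $a(\cdot,\cdot)$, since the nonlinear contribution is manifestly non-negative. Indeed, for $v_h,w_h \in V_h^d(\mathcal T_h)$ the simplified expression for $d_h$ derived above gives $d_h(w_h; v_h, v_h) = \sum_{K_e \in \mathcal T_h}(1-\gamma_e(w_h))\,\nu_e\, \|\nabla v_h\|_{0,K_e}^2 \ge 0$, because $1-\gamma_e(w_h) \in [0,1]$ and $\nu_e = \tfrac{\lambda_e h_e}{2p} \ge 0$. Hence it suffices to establish $a(v_h,v_h) \ge C_\eta \|v_h\|_{\textup{DG}}^2$, after which adding the non-negative $d_h$ term yields the assertion for all $w_h$.

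First I would split $a(v_h,v_h)$ into its diffusive, convective, and reactive parts and treat them separately. For the convective term I would integrate the volume contribution $-\sum_{K_e}(v_h, \vec b\cdot\nabla v_h)_{K_e}$ by parts elementwise, using $v_h\,\vec b\cdot\nabla v_h = \tfrac12 \vec b\cdot\nabla(v_h^2)$. This produces the volume term $\tfrac12((\nabla\cdot\vec b)v_h,v_h)_\Omega$ together with face integrals of $\vec b\cdot\vec n$ acting on the elementwise traces of $v_h^2$. Combining these face integrals with the upwind flux terms $\sum_{F}\int_F \upw{\vec b v_h} v_h\ds$ gives, via the standard upwinding identity, a non-negative jump contribution of the form $\tfrac12\sum_F \int_F |\vec b\cdot\vec n|\,\jump{v_h}^2\ds \ge 0$, with boundary faces $F\subset\partial\Omega$ handled by the prescribed substitution of the traces. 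The leftover volume term then combines with the reactive term $((c-\nabla\cdot\vec b)v_h,v_h)_\Omega$ to give exactly $(\sigma v_h,v_h)_\Omega$ with $\sigma = c-\tfrac12\nabla\cdot\vec b$, which by \eqref{eq:conduni} is bounded below by $\sigma_0\|v_h\|_{0,\Omega}^2$.

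Next I would treat the diffusive part, which is precisely the symmetric interior penalty (SIPG) form. Its coercivity for sufficiently large penalty $\eta$ is classical: one controls the consistency term $-2\varepsilon\sum_F\int_F \avg{\nabla v_h}\cdot\jump{v_h}\ds$ by a discrete trace inequality followed by Young's inequality, absorbing half of the volume gradient energy $\varepsilon\sum_{K_e}\|\nabla v_h\|_{0,K_e}^2$ and a controllable multiple of $\varepsilon\sum_F \tfrac1{h_F}\|\jump{v_h}\|_{0,F}^2$. For $\eta$ above the threshold this leaves a lower bound $C_\eta\,\varepsilon\big[\sum_{K_e}\|\nabla v_h\|_{0,K_e}^2 + \sum_F \tfrac1{h_F}\|\jump{v_h}\|_{0,F}^2\big]$, i.e. exactly the estimate of \cite[Lemma 4.12]{di-pietro2012}, which I would cite rather than rederive.

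Combining the diffusive lower bound with the reactive–convective bound $\sigma_0\|v_h\|_{0,\Omega}^2$ and discarding the non-negative upwind jump contribution yields $a(v_h,v_h)\ge C_\eta\|v_h\|_{\textup{DG}}^2$, completing the argument. The main obstacle is the bookkeeping in the convective part: one must verify that the elementwise integration-by-parts boundary terms and the upwind numerical-flux terms recombine into a signed jump penalty, paying close attention to the orientation of the outward normals on shared faces and to the modified average and jump definitions on boundary faces. Once this identity is secured, the remainder of the proof is a routine application of the established SIPG estimate together with the non-negativity of $d_h$.
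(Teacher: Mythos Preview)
Your proposal is correct and follows the same approach as the paper: observe that $d_h(w_h;v_h,v_h)\ge 0$ and reduce to the coercivity of the linear DG form, splitting into the SIPG diffusive part and the upwinded convective--reactive part. The paper merely cites \cite[Lemma~4.20]{di-pietro2012} and \cite[Lemma~2.27]{di-pietro2012} for these two pieces, whereas you sketch their proofs; the underlying argument is identical.
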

\begin{proof}
 We have $0 \le d_h(w_h; v_h, v_h)$. Thus, the result is a straightforward combination of \cite[Lemma\ 4.20]{di-pietro2012} for the diffusive part and \cite[Lemma\ 2.27]{di-pietro2012} for the advection--reaction part.
\end{proof}

\begin{lemma}\label{LEM:dg_existence}
 If the mapping $v_h \mapsto \gamma(v_h)$ is continuous and $\eta > 0$ is chosen as in Lemma \ref{LEM:disc_coercivity}, then there exists a solution to \eqref{EQ:dg_scheme}.
\end{lemma}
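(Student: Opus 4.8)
The plan is to reproduce, in the DG setting, the Brouwer fixed point argument used in the proof of Theorem~\ref{TH:existence}, with Lemma~\ref{LEM:disc_coercivity} now playing the role of the coercivity estimate and $\|\cdot\|_{\textup{DG}}$ replacing $\|\cdot\|_S$. Since $\|\cdot\|_{\textup{DG}}$ is induced by an inner product $(\cdot,\cdot)_{\textup{DG}}$ on the finite-dimensional space $X := V_h^d(\mathcal T_h)$, and since $\sigma_0 > 0$ guarantees that it is in fact a norm, I would regard $X$ as a finite-dimensional Hilbert space and define $M \colon X \to X$ through the Riesz representation
\begin{equation*}
 (M u_h, v_h)_{\textup{DG}} := a(u_h, v_h) + d_h(u_h; u_h, v_h) - \ell_h(v_h) \qquad \forall v_h \in X.
\end{equation*}
This is well posed because, for fixed $u_h$, the right-hand side is linear in $v_h$ (the stabilization form $d_h$ is linear in its third argument). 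By construction, $M u_h = 0$ is equivalent to $u_h$ solving \eqref{EQ:dg_scheme}, so it suffices to produce a zero of $M$.

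First I would check the coercivity hypothesis of Lemma~\ref{lemma:consbrouwer}. Testing with $v_h = u_h$ and invoking Lemma~\ref{LEM:disc_coercivity} (applicable because $\eta$ is chosen large enough) gives
\begin{equation*}
 (M u_h, u_h)_{\textup{DG}} = a(u_h, u_h) + d_h(u_h; u_h, u_h) - \ell_h(u_h) \ge C_\eta \| u_h \|_{\textup{DG}}^2 - \| \ell_h \|_* \| u_h \|_{\textup{DG}},
\end{equation*}
where $\| \ell_h \|_*$ denotes the norm of the bounded linear functional $\ell_h$ with respect to $\|\cdot\|_{\textup{DG}}$, boundedness being automatic on the finite-dimensional space $X$. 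The lower bound factors as $\| u_h \|_{\textup{DG}} \big( C_\eta \| u_h \|_{\textup{DG}} - \| \ell_h \|_* \big)$, which is strictly positive on the sphere $\| u_h \|_{\textup{DG}} = K$ as soon as $K > \| \ell_h \|_* / C_\eta$. This provides exactly the strict positivity required by Lemma~\ref{lemma:consbrouwer}.

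It remains to establish the continuity of $M$, which is the only place the structural hypothesis enters and is therefore the step to watch. The forms $a(\cdot,\cdot)$ and $\ell_h$ are (bi)linear and hence continuous, so the difficulty is confined to the nonlinear term $d_h(u_h; u_h, u_h)$; inspecting its DG form, the dependence on the first argument is entirely through the blending factors $\gamma_e(u_h)$, with the remaining dependence being polynomial in $u_h$. Thus continuity of $M$ follows from the assumed continuity of $v_h \mapsto \gamma(v_h)$ together with the equivalence of norms on $X$. With continuity and the coercivity-driven positivity in hand, Lemma~\ref{lemma:consbrouwer} yields a $u_h \in X$ with $\| u_h \|_{\textup{DG}} < K$ and $M u_h = 0$, i.e.\ a solution of \eqref{EQ:dg_scheme}. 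I expect no genuine obstacle beyond what is already resolved: the sole substantive analytic input is the discrete coercivity of Lemma~\ref{LEM:disc_coercivity}, while the simplification relative to the CG argument (Theorem~\ref{TH:existence}) comes from the fact that the fluctuation operator annihilates $V_h^d$, so that the bilinear form $s_h$ drops out and no boundary lifting $u_{h,D}$ is needed.
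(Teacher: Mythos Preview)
Your proposal is correct and follows essentially the same route as the paper's proof: both define the map $M$ via $\langle M u_h, v_h\rangle = a(u_h,v_h)+d_h(u_h;u_h,v_h)-\ell_h(v_h)$, invoke Lemma~\ref{LEM:disc_coercivity} to obtain the lower bound $C_\eta\|u_h\|_{\textup{DG}}^2-\|\ell_h\|\,\|u_h\|_{\textup{DG}}$, and then appeal to the Brouwer-type Lemma~\ref{lemma:consbrouwer} as in the proof of Theorem~\ref{TH:existence}. Your version is in fact more explicit than the paper's, which simply cites ``the arguments in the proof of Theorem~\ref{TH:existence}'' without spelling out the continuity check or the choice of $K$.
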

\begin{proof}
 As in the proof of Theorem \ref{TH:existence}, we define $M \colon V_h^d(\mathcal T_h) \to V_h^d(\mathcal T_h)$ via 
 \begin{equation*}
  \langle M v_h, w_h \rangle := a(v_h, w_h) + d_h(v_h; v_h, w_h) - \ell_h(v_h) \qquad \forall v_h, w_h \in V_h^d(\mathcal T_h).
 \end{equation*}
 This directly implies that
 \begin{equation*}
  \langle M v_h, v_h \rangle \ge C_\eta \| v_h \|^2_\textup{DG} - \| \ell_h \| \| v_h \|_\textup{DG},
 \end{equation*}
 where $\| \ell_h \|$ is the operator norm of the right-hand side's linear form, which is bounded since we consider finite-dimensional vector spaces. The arguments in the proof of Theorem \ref{TH:existence} guarantee the existence of $u_h \in V_h^d(\mathcal T_h)$ that satisfies \eqref{EQ:dg_scheme}.
\end{proof}

\begin{lemma}
Let the assumptions of Lemma \ref{LEM:dg_existence} hold. Suppose that the solution $u$ of the continuous problem has $H^{k+1}(K_e)$, $k\in\{1,\ldots,p\}$ regularity for all $K_e \in \mathcal T_h$. Then there are constants $C_1,C_2>0$, independent of $\varepsilon$ and the mesh, such that
 \begin{equation*}
  \| u - u_h \|_\textup{DG} \le C_1 h^k \sqrt{\varepsilon + h} | u |_{k+1,\mathcal T_h} + C_2 h^{1/2} | u |_{1,\mathcal T_h}.
 \end{equation*}
\end{lemma}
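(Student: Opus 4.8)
The plan is to mirror the proof of Theorem~\ref{TH:full_stab}, replacing the energy norm $\|\cdot\|_S$ by $\|\cdot\|_\textup{DG}$, the coercivity estimate by Lemma~\ref{LEM:disc_coercivity}, and the Scott--Zhang interpolant by the $L^2$ orthogonal projection $\pi_h$ onto $V_h^d(\mathcal T_h)$ --- the same operator that defines the fluctuation operator, so that $\kappa(\nabla v_h)=0$ and $d_h$ reduces to its elementwise gradient form. Writing $\eta:=u-\pi_h u$ and $\varrho:=u_h-\pi_h u$, so that $u-u_h=\eta-\varrho$, I would first dispatch the projection error by standard local approximation and discrete trace inequalities, which give $\varepsilon\sum_{K_e\in\mathcal T_h}\|\nabla\eta\|_{0,K_e}^2\le C\varepsilon h^{2k}|u|_{k+1,\mathcal T_h}^2$, $\varepsilon\sum_{F\in\mathcal F}h_F^{-1}\|\jump{\eta}\|_{0,F}^2\le C\varepsilon h^{2k}|u|_{k+1,\mathcal T_h}^2$, and $\sigma_0\|\eta\|_{0,\Omega}^2\le Ch^{2k+2}|u|_{k+1,\mathcal T_h}^2$; hence $\|\eta\|_\textup{DG}\le Ch^k\sqrt{\varepsilon+h}\,|u|_{k+1,\mathcal T_h}$.

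For the discrete part $\varrho$ I would exploit the DG consistency of the bilinear form, $a(u,v_h)=\ell_h(v_h)$ for all $v_h\in V_h^d(\mathcal T_h)$ (valid for the $H^2$-regular exact solution, whose interelement jumps vanish), together with the scheme~\eqref{EQ:dg_scheme}, to get $a(u_h-u,v_h)+d_h(u_h;u_h,v_h)=0$. Taking $v_h=\varrho$, substituting $u_h-u=\varrho-\eta$, and using the bilinearity of $d_h(u_h;\cdot,\cdot)$ in its last two arguments to split $d_h(u_h;u_h,\varrho)=d_h(u_h;\varrho,\varrho)+d_h(u_h;\pi_h u,\varrho)$, I obtain the error identity
\[
 a(\varrho,\varrho)+d_h(u_h;\varrho,\varrho)=a(\eta,\varrho)-d_h(u_h;\pi_h u,\varrho).
\]
Since Lemma~\ref{LEM:disc_coercivity} in fact yields $a(\varrho,\varrho)\ge C_\eta\|\varrho\|_\textup{DG}^2$ on its own (the form $d_h\ge0$ being added with room to spare), the nonnegative term $d_h(u_h;\varrho,\varrho)$ is retained on the left, leaving $C_\eta\|\varrho\|_\textup{DG}^2+d_h(u_h;\varrho,\varrho)\le a(\eta,\varrho)-d_h(u_h;\pi_h u,\varrho)$.

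It then remains to bound the two terms on the right. The nonlinear term is treated as in~\eqref{eq:nonlest}: by the Cauchy--Schwarz and Young inequalities for the symmetric positive semidefinite form $d_h(u_h;\cdot,\cdot)$ one has $d_h(u_h;\pi_h u,\varrho)\le d_h(u_h;\pi_h u,\pi_h u)+\tfrac14 d_h(u_h;\varrho,\varrho)$, the second summand being absorbed on the left, while $\nu_e=\mathcal O(h)$, $1-\gamma_e(u_h)\in[0,1]$ and the $H^1$-stability of $\pi_h$ give $d_h(u_h;\pi_h u,\pi_h u)\le Ch\sum_{K_e\in\mathcal T_h}\|\nabla\pi_h u\|_{0,K_e}^2\le Ch\,|u|_{1,\mathcal T_h}^2$, the source of the $C_2h^{1/2}|u|_{1,\mathcal T_h}$ term. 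The consistency term $a(\eta,\varrho)$ is the crux: its symmetric interior-penalty diffusive part contributes $C\varepsilon^{1/2}h^k|u|_{k+1,\mathcal T_h}\|\varrho\|_\textup{DG}$, whereas the advective--reactive part, estimated through the standard upwind continuity bound of Di Pietro and Ern~\cite{di-pietro2012}, contributes $Ch^{k+1/2}|u|_{k+1,\mathcal T_h}\|\varrho\|_\textup{DG}$, so altogether $a(\eta,\varrho)\le Ch^k\sqrt{\varepsilon+h}\,|u|_{k+1,\mathcal T_h}\|\varrho\|_\textup{DG}$. A final Young inequality absorbs $\tfrac{C_\eta}{2}\|\varrho\|_\textup{DG}^2$ on the left, yielding $\|\varrho\|_\textup{DG}\le C_1h^k\sqrt{\varepsilon+h}\,|u|_{k+1,\mathcal T_h}+C_2h^{1/2}|u|_{1,\mathcal T_h}$, and the triangle inequality $\|u-u_h\|_\textup{DG}\le\|\eta\|_\textup{DG}+\|\varrho\|_\textup{DG}$ completes the argument.

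I expect the main obstacle to be precisely the advective component of the continuity estimate for $a(\eta,\varrho)$. Obtaining the sharp $h^{k+1/2}$ rate --- the origin of the $\sqrt{\varepsilon+h}$ factor and the improvement over the CG bound of Theorem~\ref{TH:full_stab} --- requires using the upwind numerical flux together with the face-jump contributions of $\|\cdot\|_\textup{DG}$, so that no $\varepsilon^{-1/2}$ blow-up of the kind encountered in~\eqref{eq:conv2} arises. This is the step where the DG structure genuinely sharpens the analysis, while the remaining pieces are routine adaptations of the CG proof.
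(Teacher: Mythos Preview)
Your proposal is correct and follows essentially the same route as the paper's own proof, which is extremely terse: it first invokes \cite[Sect.~2.3.2 and 4.2.3.3]{di-pietro2012} for the unstabilized DG estimate and then appeals to ``Strang's first lemma as in the proof of Theorem~\ref{TH:full_stab}'' to absorb the $d_h$ contribution. What you have written is precisely the unfolded version of that sketch --- the $\eta/\varrho$ splitting, the coercivity from Lemma~\ref{LEM:disc_coercivity}, the Cauchy--Schwarz/Young treatment of $d_h(u_h;\pi_h u,\varrho)$ as in~\eqref{eq:nonlest}, and the upwind continuity bound for the advective part of $a(\eta,\varrho)$ --- so the two proofs coincide in substance.
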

Here, the broken norm is defined as $| v_h |^2_{k,\mathcal T_h} = \sum_{K_e \in \mathcal T_h} | v_h |^2_{k, K_e}$.
\begin{proof}
 The result clearly holds for the scheme without stabilization, i.e., for $u_h$ satisfying \eqref{EQ:dg_scheme} without the term $d_h$. In this case, the validity of the claim can be deduced using a simple combination of the arguments in \cite[Sect.\ 2.3.2 and 4.2.3.3]{di-pietro2012}. The full result follows from an application of Strang's first lemma as in the proof of Theorem \ref{TH:full_stab}.
\end{proof}

\section{Numerical examples}
\label{sec:num}
In this section, we present numerical studies of the residual-based WENO (RB-WENO) scheme for both linear and nonlinear scalar problems, as well as for the Euler equations of gas dynamics. We demonstrate the excellent shock-capturing capabilities of the RB-WENO scheme for continuous and discontinuous finite elements, and its ability to converge to correct entropy solutions in the nonlinear case. To compare our results with those in \cite{kuzmin2023a,vedral2023}, we use continuous finite elements for scalar benchmarks and discontinuous finite elements for the Euler equations. 

Throughout all examples, we employ Lagrange basis functions of order $p$. Time integration is performed using strong stability preserving (SSP) Runge-Kutta methods of order $p+1$ \cite{gottlieb2001}. Following the approaches presented in \cite{kuzmin2023a,vedral2023}, we set $q=1$ in \eqref{eq:gamma-old}.

For clarity, we label the schemes from \cite{kuzmin2023a,vedral2023} as CG-WENO and DG-WENO for continuous and discontinuous finite elements, respectively. Our newly proposed scheme is referred to as CG-RB-WENO-$\theta$ and DG-RB-WENO-$\theta$, where $\theta$ is the sensitivity parameter introduced in \eqref{eq:newweights}.
All schemes are implemented using the open-source C++ library MFEM \cite{anderson2021,mfem}, and the results of two-dimensional test problems are visualized using the open-source C++ software GLVis \cite{glvis}.
\subsection{Solid body rotation}
We begin by considering LeVeque's \cite{leveque1996} solid body rotation problem which is a popular stability test for discretizations of the transport problem 
\begin{align*}
\frac{\partial u}{\partial t}+\nabla\cdot(\mathbf{v}u) = 0 \qquad \text{in} \; \Omega=(0,1)^2.
\end{align*}
The divergence-free velocity field ${\bf v}(x,y)=2\pi(0.5-y,x-0.5)$ rotates a smooth hump, a sharp cone and a slotted cylinder around the center of the domain. After each revolution ($t=2\pi r$, $r\in \mathbb{N}$) the exact solution coincides with the initial condition given by
\begin{equation}
u_0(x,y) = \begin{cases}
u_0^{\text{hump}}(x,y) & \text{if } \sqrt{(x-0.25)^2+(y-0.5)^2}\le 0.15, \\
u_0^{\text{cone}}(x,y) & \text{if } \sqrt{(x-0.5)^2+(y-0.25)^2}\le 0.15, \\
1 & \text{if }\begin{cases}
\sqrt{(x-0.5)^2+(y-0.75)^2}\le0.15, \\
|x-0.5|\ge0.025, y\ge0.85,
\end{cases}\\
0 & \text{otherwise},
\end{cases}
\end{equation}
where
\begin{align*}
u_0^{\text{hump}}(x,y) &= \frac{1}{4}+\frac{1}{4}\cos\bigg(\frac{\pi\sqrt{(x-0.25)^2+(y-0.5)^2}}{0.15}\bigg), \\
u_0^{\text{cone}}(x,y) &= 1-\frac{\sqrt{(x-0.5)^2+(y-0.25)^2}}{0.15}.
\end{align*}

We evolve numerical solutions up to the final time $t=1.0$ on a uniform quadrilateral mesh using $E_h=128^2$ elements and $p=2$. The results are shown in Figs \ref{fig:sbr1}-\ref{fig:sbr3}. Consistent with \cite{kuzmin2023a}, the CG-WENO scheme preserves the structure of all rotating objects but exhibits a 'dip' at the back of the slotted cylinder. This issue can be resolved by employing the CG-RB-WENO scheme, even with a small parameter $\theta\leq 1$, at the cost of losing bound preservation. If necessary, global bounds can be preserved using the flux limiting techniques presented in, e.g., \cite{kuzmin2023}.
\captionsetup[subfigure]{
	format = hang,
	justification = centering, 
}
\begin{figure}[!htb]
	\centering
	\begin{subfigure}[b]{.32\linewidth}
		\includegraphics[width=\linewidth]{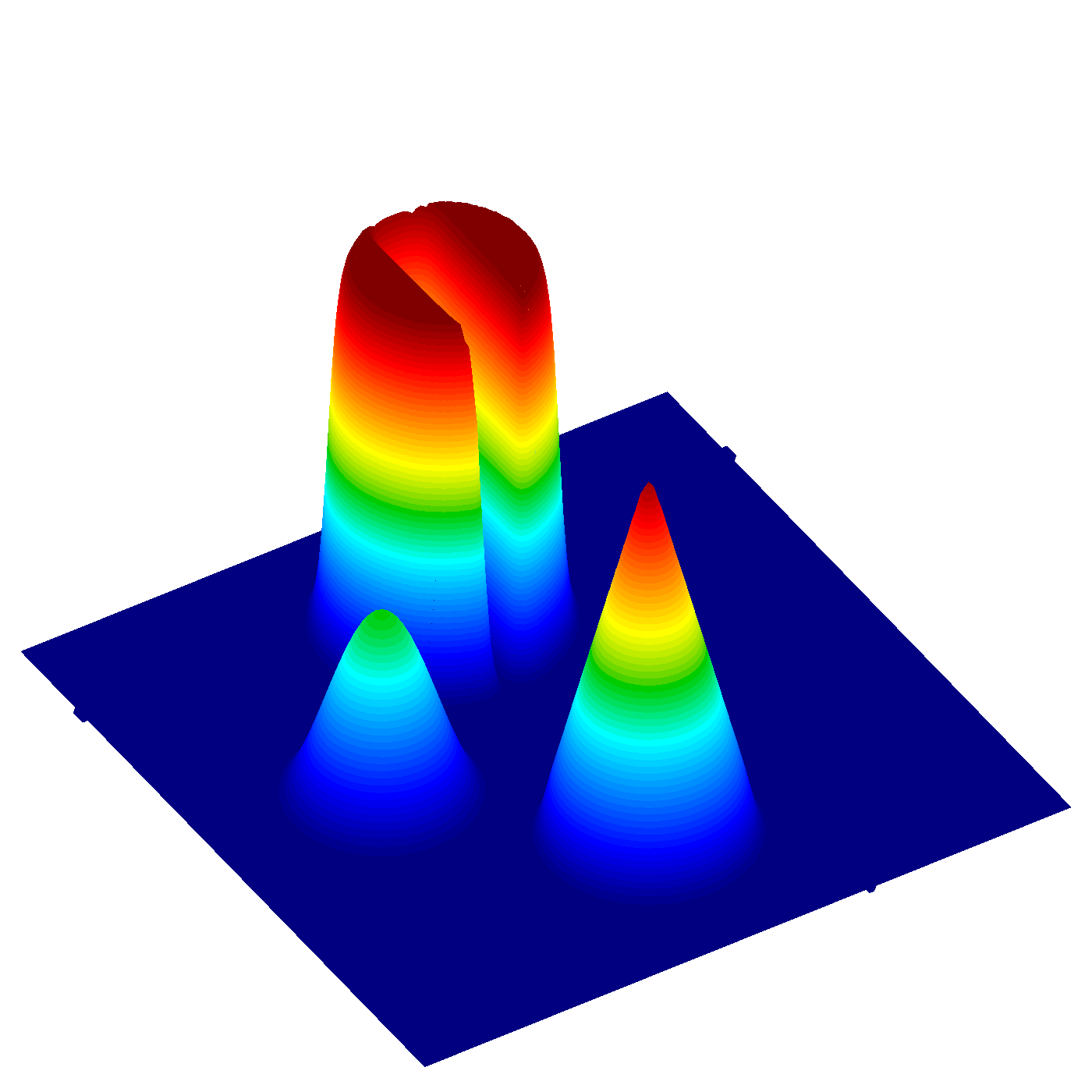}
		\caption{CG-WENO,\newline\hspace*{0.cm} $u_h \in [0.000,1.000]$}
		\label{fig:sbr1}
	\end{subfigure}
	\begin{subfigure}[b]{.32\linewidth}
		\includegraphics[width=\linewidth]{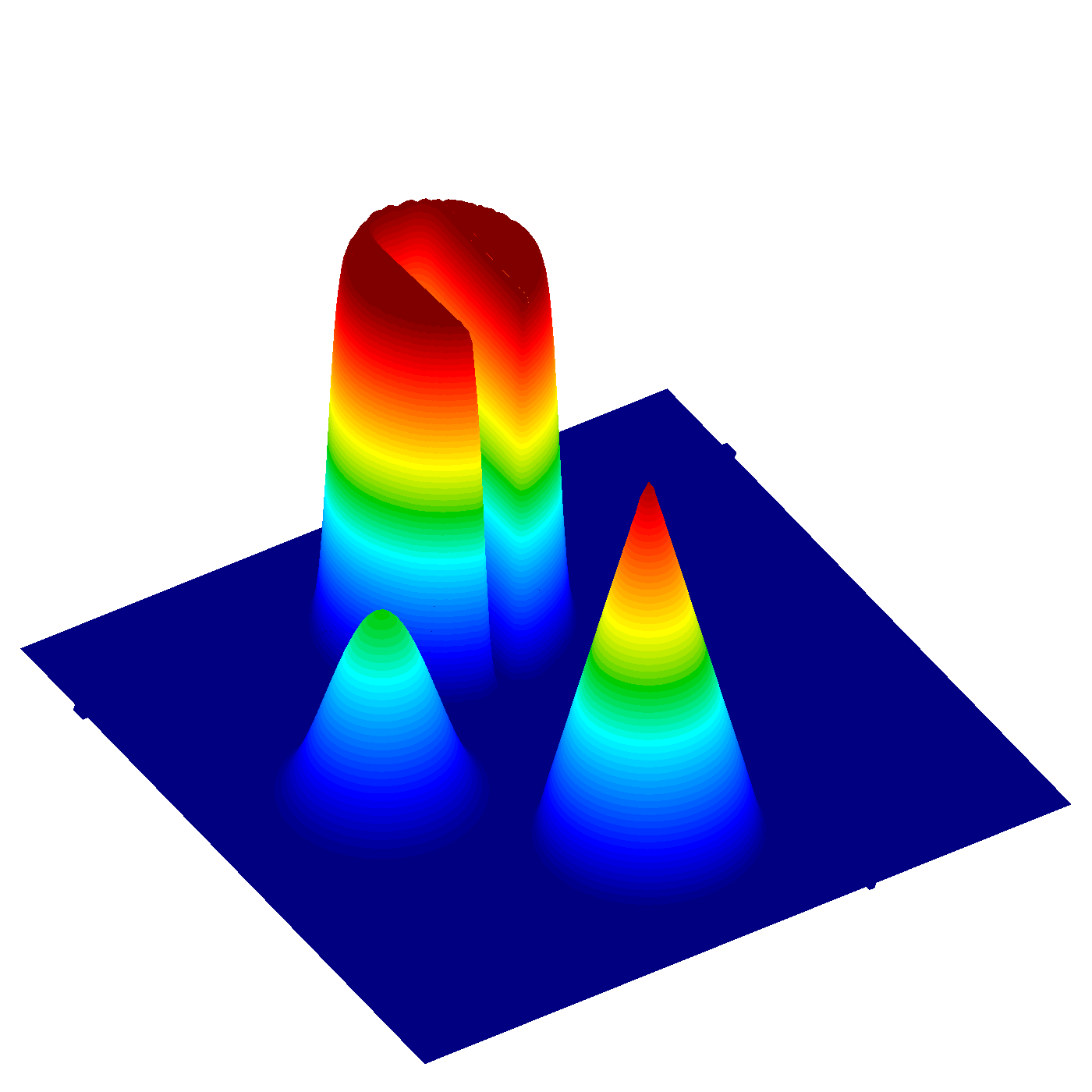}
		\caption{CG-RB-WENO-$0.1$, $u_h \in [-0.008,1.010]$}
		\label{fig:sbr2}
	\end{subfigure}
	\begin{subfigure}[b]{.32\linewidth}
		\includegraphics[width=\linewidth]{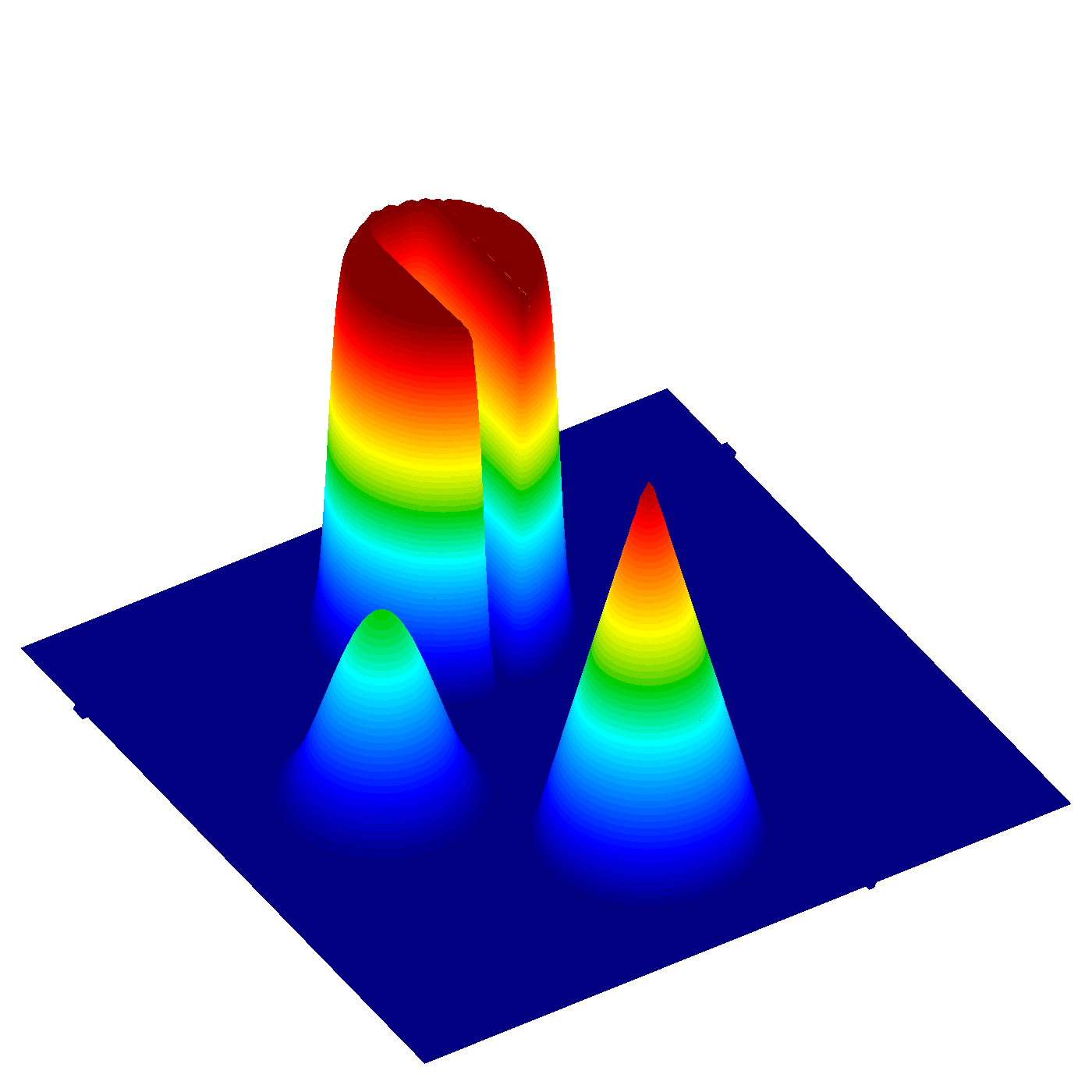}
		\caption{CG-RB-WENO-$1.0$, $u_h \in [-0.009,1.010]$}
		\label{fig:sbr3}
	\end{subfigure}
	\caption{Solid body rotation, numerical solutions at $t=1.0$ obtained using $E_h=128^2$ and $p=2$.}
	\label{fig:sbr}
\end{figure}
\subsection{KPP problem}
To assess the entropy stability properties of our scheme, we consider the two-dimensional KPP problem \cite{kurganov2007}. In this example, we solve 
\begin{align*}
\frac{\partial u}{\partial t}+\nabla \cdot \mathbf{f}(u)=0.
\end{align*}
The nonconvex flux function 
\begin{align*}
\mathbf{f}(u)=(\sin(u),\cos(u))
\end{align*}
produces an entropy solution displaying a rotational wave structure. Many stabilized DG schemes require an additional entropy fix \cite{kuzmin2021a,kuzmin2023,moujaes2023} to avoid potential convergence to incorrect weak solutions.
The computational domain is $\Omega=(-2,2)\times(-2.5,1.5)$ on which the initial condition is given by
\begin{align*}
u_0(x,y)=\begin{cases}
\frac{7\pi}{2} & \text{if }\sqrt{x^2+y^2}\le 1,\\
\frac{\pi}{4} & \text{otherwise}.
\end{cases}
\end{align*}

We use $\lambda_e=1.0$ as an upper global bound for the maximum wave speed to compute the viscosity parameter $\nu_e$ in \eqref{eq:nuLF}. We refer the reader to \cite{guermond2017} for more accurate bounds. We deviate from the default settings of our HWENO scheme as described in \cite{kuzmin2023a} by setting the linear weights $\tilde{\omega}_l^e=0.2$, $l=0,\ldots,4$.

As in the previous example, we perform numerical simulations on a uniform quadrilateral mesh using $E_h=128^2$ elements. The results for quadratic finite elements are displayed in Figs \ref{fig:kpp1}-\ref{fig:kpp3}. The CG-WENO scheme successfully converges to the correct entropy solution. Despite being less diffusive due to the ENO-like stencil selection near discontinuities, our CG-RB-WENO scheme also converges to the correct entropy solution without any additional entropy fixes.
\begin{figure}[!htb]
	\centering
	\begin{subfigure}[b]{.32\linewidth}
		\includegraphics[width=\linewidth]{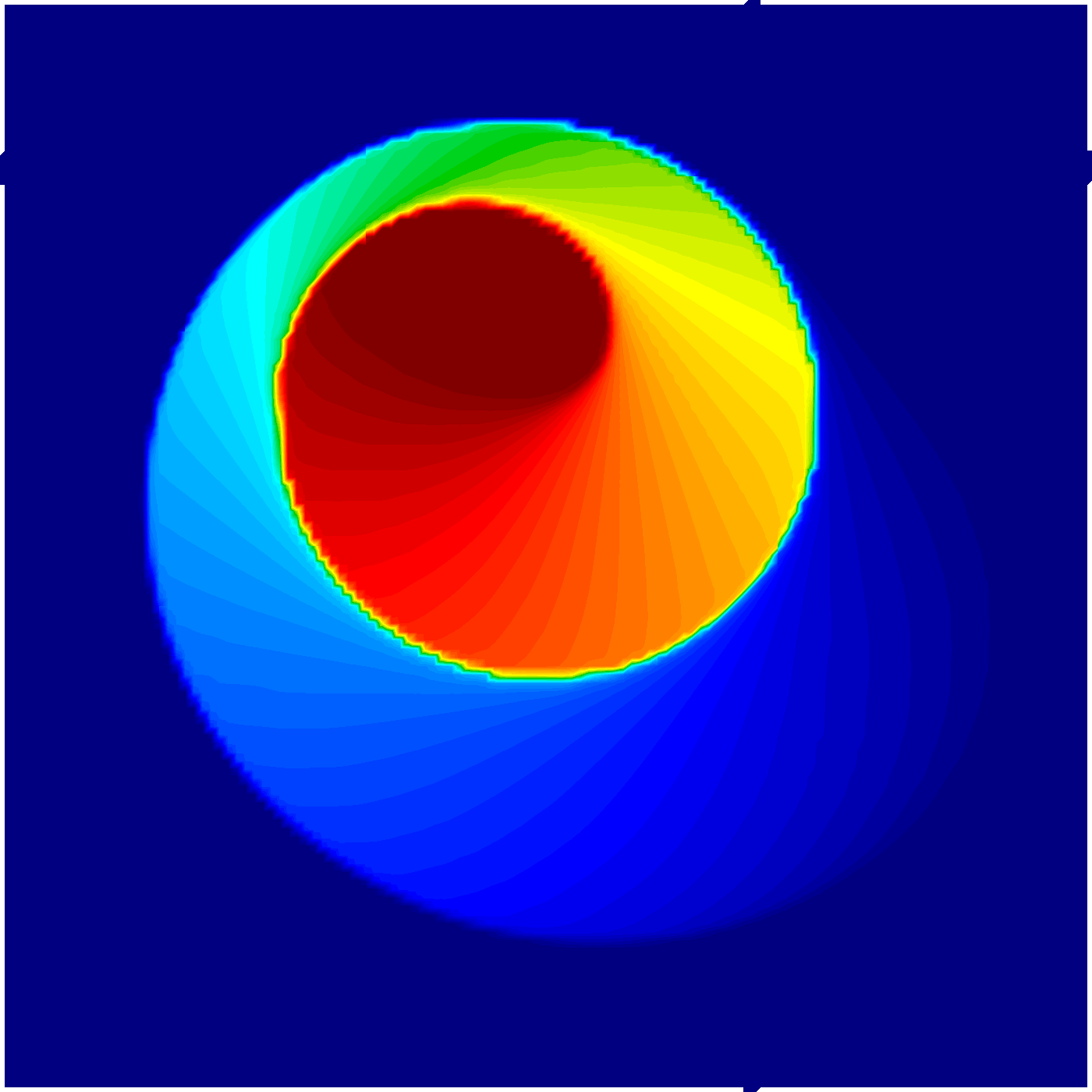}
		\caption{CG-WENO,\newline\hspace*{0.4cm} $u_h \in [0.7777,11.076]$}
		\label{fig:kpp1}
	\end{subfigure}
	\begin{subfigure}[b]{.32\linewidth}
		\includegraphics[width=\linewidth]{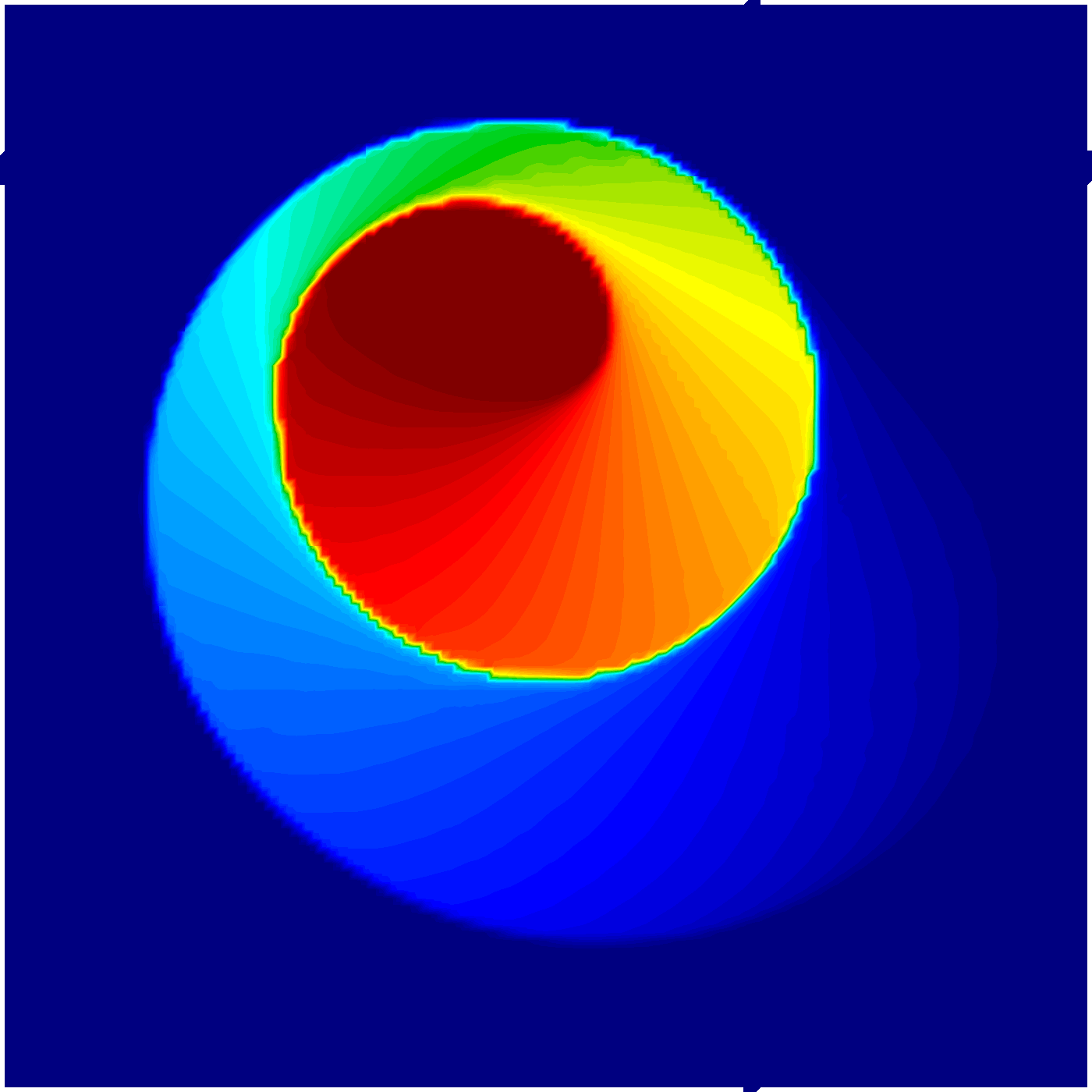}
		\caption{CG-RB-WENO-$0.1$, $u_h \in [0.743,11.048]$}
		\label{fig:kpp2}
	\end{subfigure}
	\begin{subfigure}[b]{.32\linewidth}
		\includegraphics[width=\linewidth]{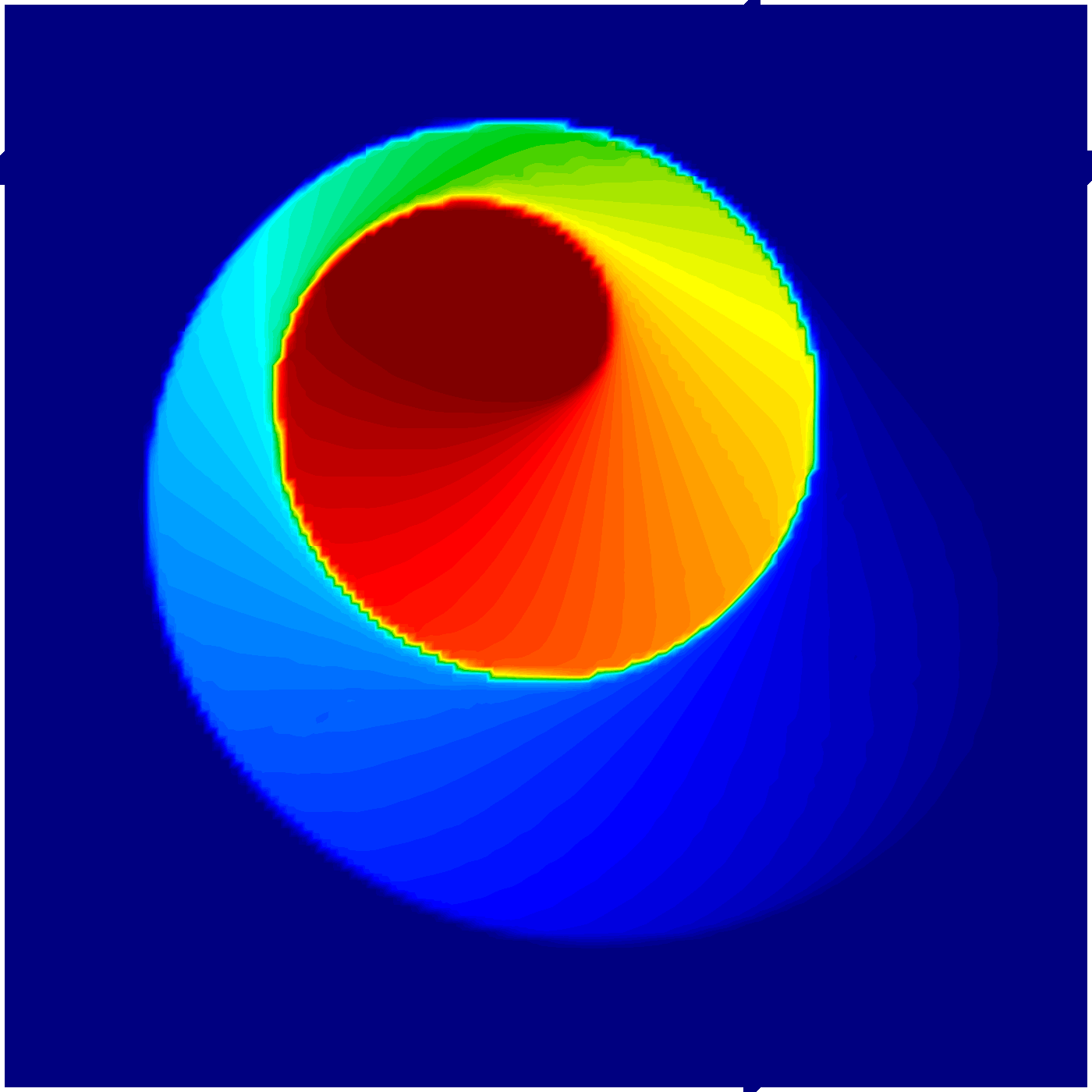}
		\caption{CG-RB-WENO-$1.0$, $u_h \in [0.740,11.044]$}
		\label{fig:kpp3}
	\end{subfigure}
	\caption{KPP problem, numerical solutions at $t=1.0$ obtained using $E_h=128^2$ and $p=2$.}
	\label{fig:kpp}
\end{figure}

\subsection{Euler equations of gas dynamics}
We consider the Euler equations of gas dynamics which represent the conservation of mass, momentum and total energy. The solution vector and flux matrix read
\begin{align*}
U = \begin{bmatrix}
\varrho \\
\varrho \mathbf{v}\\
\varrho E
\end{bmatrix}\in \mathbb{R}^{d+2}, \quad 
\mathbf{F(U)}=\begin{bmatrix}
\varrho \mathbf{v} \\
\varrho \mathbf{v} \bigotimes \mathbf{v}+p\mathbf{I}\\
(\varrho E + p)\mathbf{v}
\end{bmatrix}\in \mathbb{R}^{(d+2) \times d}.
\end{align*}
Here, $\varrho$, $\mathbf{v}$, $E$ denote the density, velocity and specific total energy, respectively. We further denote by $\mathbf{I}$ the identity matrix and by $p$ the pressure which is computed using the polytropic ideal gas equation
$$
p=\varrho e (\gamma - 1),
$$
where $\varrho e$ and $\gamma$ denote the internal energy and the heat capacity ratio, respectively. We set $\gamma=1.4$ throughout all numerical simulations.

To reduce the computational effort, we determine our blending factor using only information from the density field and apply this factor to both the momentum and energy computations (cf. \cite{vedral2023,zhao2020}).

\subsubsection{Titarev-Toro problem}
We consider the Titarev-Toro problem \cite{titarev2004} which is a more complex variant of the sine-shock interaction problem, also known as the Shu-Osher problem \cite{shu1989}. Similarly to the Shu-Osher setup, the computational domain $\Omega=(-5,5)$ is bounded by an inlet on the left boundary and a reflecting wall on the right boundary. This test is widely used to evaluate a scheme's ability to accurately represent high-frequency waves behind a shock. The problem is equipped with the initial conditions
\begin{align*}
	\begin{bmatrix}
	\varrho_L \\v_L\\p_L
	\end{bmatrix}=
	\begin{bmatrix}1.515695\\0.523346\\1.805
	\end{bmatrix}, \quad 
	\begin{bmatrix}
	\varrho_R \\v_R\\p_R
	\end{bmatrix}=
	\begin{bmatrix}1.0+0.1\sin(20\pi(x-5.0))\\0.0\\1.0
	\end{bmatrix}.
\end{align*}  
\begin{figure}[!htb]
	\centering
	\begin{subfigure}[b]{\linewidth}
		\centering
		\begin{tikzpicture}
		\draw[rounded corners] (0, 0) rectangle (14, 0.5) node[pos=.5]{};
		\draw[very thick, color={rgb:red,0;green,0.4470;blue,0.7410}] (9,0.25)--(9.5,0.25);
		\node at (2.5,0.25) (a) {DG-WENO};
		\node at (6.55,0.25) (a) {DG-RB-WENO-$1.0$};
		\node at (11.5,0.25) (a) {DG-RB-WENO-$10.0$};
		\draw[very thick,color={rgb:red,0.9290;green,0.6940;blue,0.1250}] (0.75,0.25)--(1.25,0.25);
		\draw[very thick,color={rgb:red,0.8500;green,0.3250;blue,0.0980}] (4.175,0.25)--(4.675,0.25);
		\end{tikzpicture}
		\vspace*{0.25cm}
	\end{subfigure}
	\begin{subfigure}[b]{.48\linewidth}
		\includegraphics[width=\linewidth]{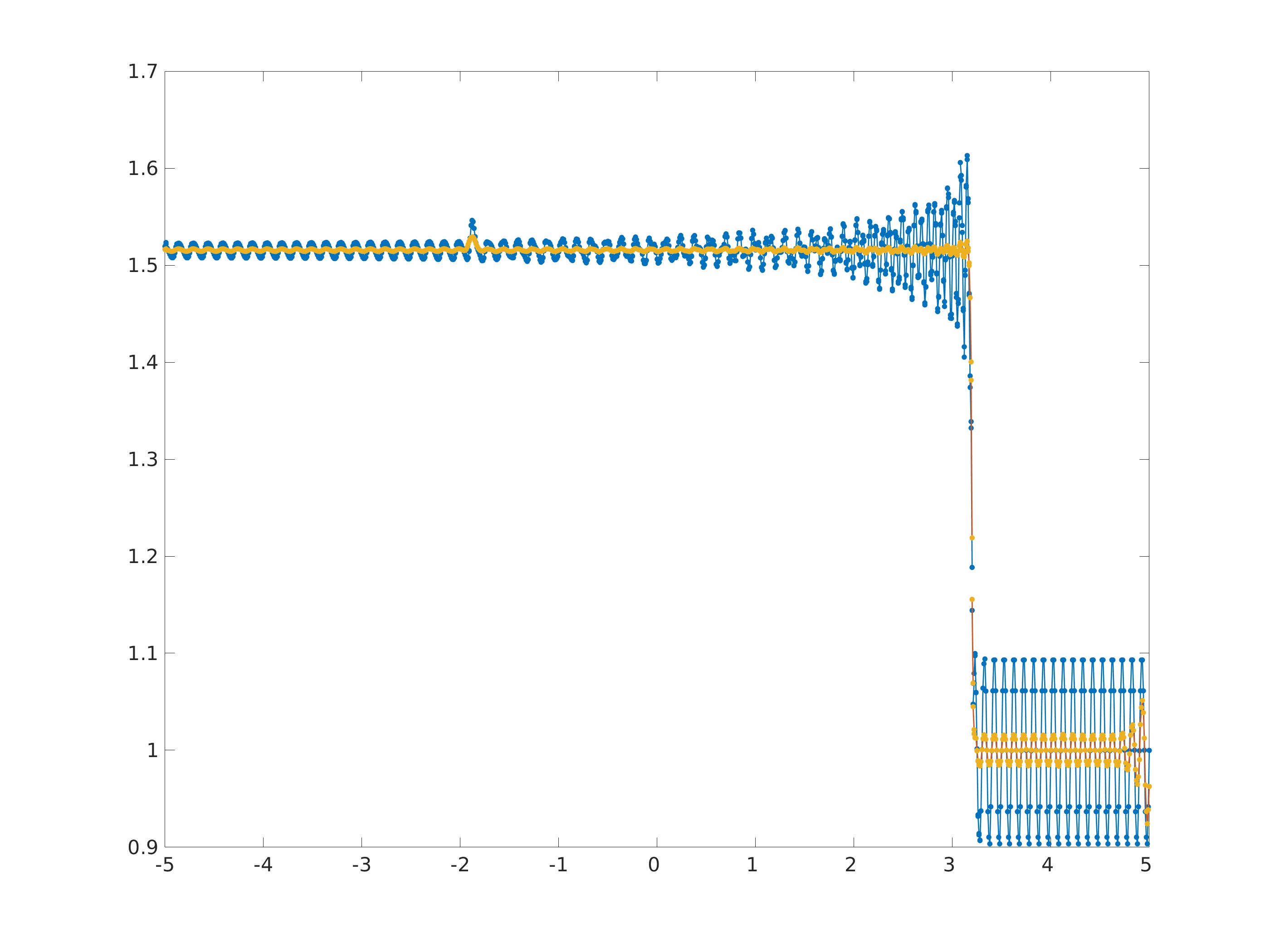}
		\caption{$p=1$}
		\label{fig:ttp1}
	\end{subfigure}
	\begin{subfigure}[b]{.48\linewidth}
		\includegraphics[width=\linewidth]{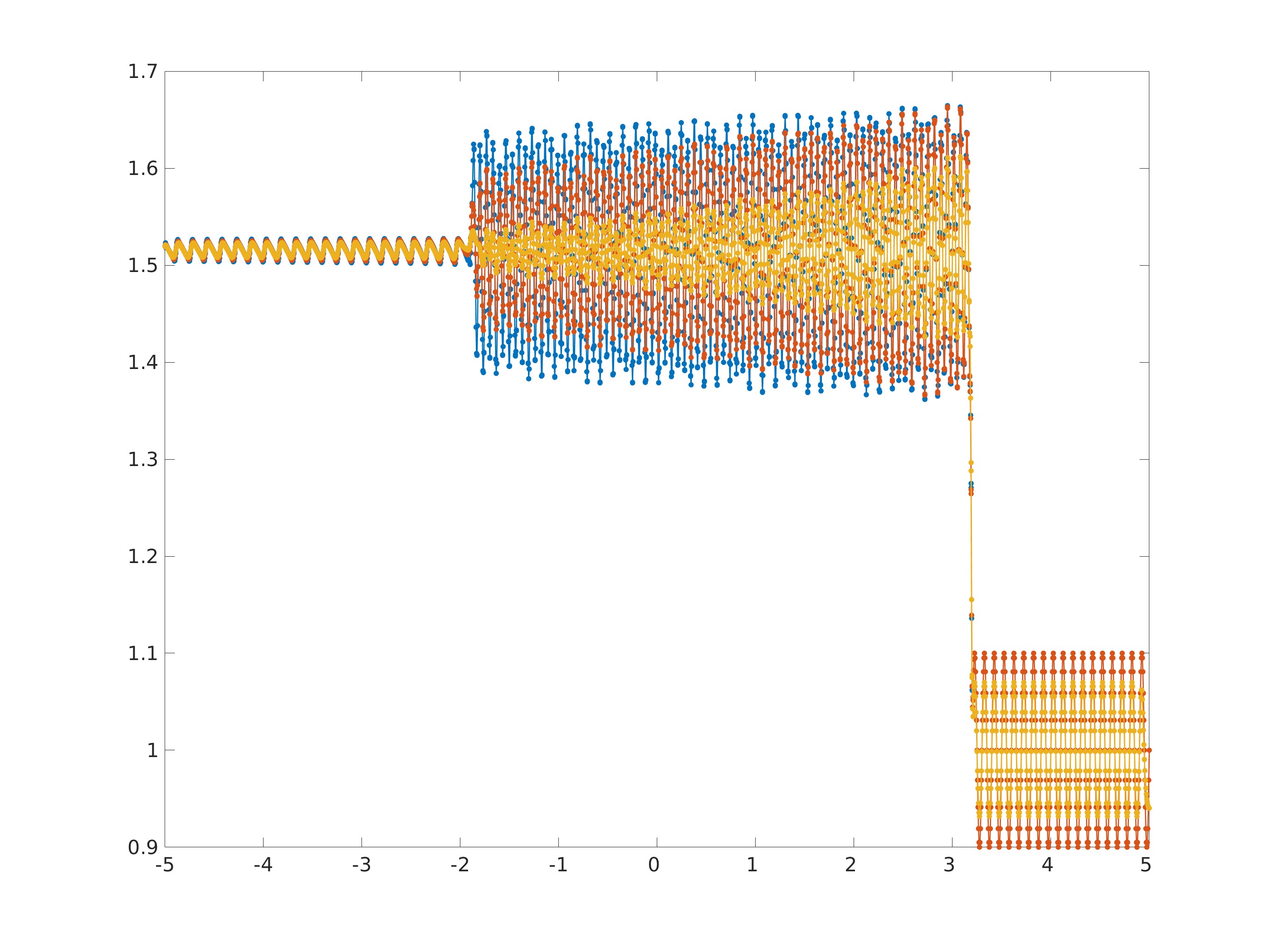}
		\caption{$p=2$}
		\label{fig:ttp2}
	\end{subfigure}
	\caption{Titarev-Toro problem, density profiles $\varrho$ at $t=5.0$ obtained using $E_h=1000$ and $p\in\{1,2\}$.}
	\label{fig:ttp}
\end{figure}

We perform numerical simulations up to the final time $t=5.0$ using $E_h=1000$ uniform elements. Fig. \ref{fig:ttp1} illustrates that both the DG-WENO and DG-RB-WENO schemes struggle to accurately capture the physical oscillations present in the exact solution when employing linear finite elements. In fact, the results obtained with the DG-WENO and the DG-RB-WENO-$1.0$ scheme are nearly indistinguishable. Clearly, increasing the threshold parameter $\theta$ effectively reduces numerical dissipation, thereby enhancing overall accuracy. Similar behavior can be observed for quadratic finite elements, as shown in Fig. \ref{fig:ttp2}. In contrast to the DG-WENO scheme, our residual-based variant accurately captures all features of the exact solution, even for relatively small values of $\theta$.
\subsubsection{Kelvin-Helmholtz instability}
Next, we consider a Kelvin-Helmholtz instability problem \cite{ma2023}. This scenario involves the transformation of a narrow shear layer into a complex pattern of vortices, serving as a benchmark to study shear-driven turbulent flows and assess a scheme's ability to resolve small-scale structures. It serves as an ideal test case to compare the dissipation characteristics of our DG-RB-WENO scheme with the DG-WENO scheme. The initial data
\begin{align*}
\begin{bmatrix}
\varrho_1 \\v_{x,1}\\v_{y,1}\\p_1
\end{bmatrix}=
\begin{bmatrix}\phantom{-}2.0\\-0.5\\0.01\sin(2\pi(x-0.5))\\\phantom{-}2.5
\end{bmatrix}, \quad 
\begin{bmatrix}
\varrho_2 \\v_{x,2}\\v_{y,2}\\p_2
\end{bmatrix}=
\begin{bmatrix}1.0\\0.5\\0.01\sin(2\pi(x-0.5))\\2.5
\end{bmatrix}
\end{align*}
are prescribed in the computational domain $\Omega=(0,1)\times(0,1)$.

In Fig. \ref{fig:kh}, we present snapshots of the density distribution at the final time $t=1.0$ obtained using $E_h=512^2$ elements and $p=1$. Clearly, the DG-WENO scheme suffers from excessive diffusion, thus preventing the resolution of fine-scale vortical structures. In contrast, the RB-DG-WENO scheme, by design, effectively mitigates this issue, allowing for the generation and interaction of fine-scale vortical structures.
\begin{figure}[!htb]
	\centering
	\begin{subfigure}[b]{.32\linewidth}
		\includegraphics[width=\linewidth]{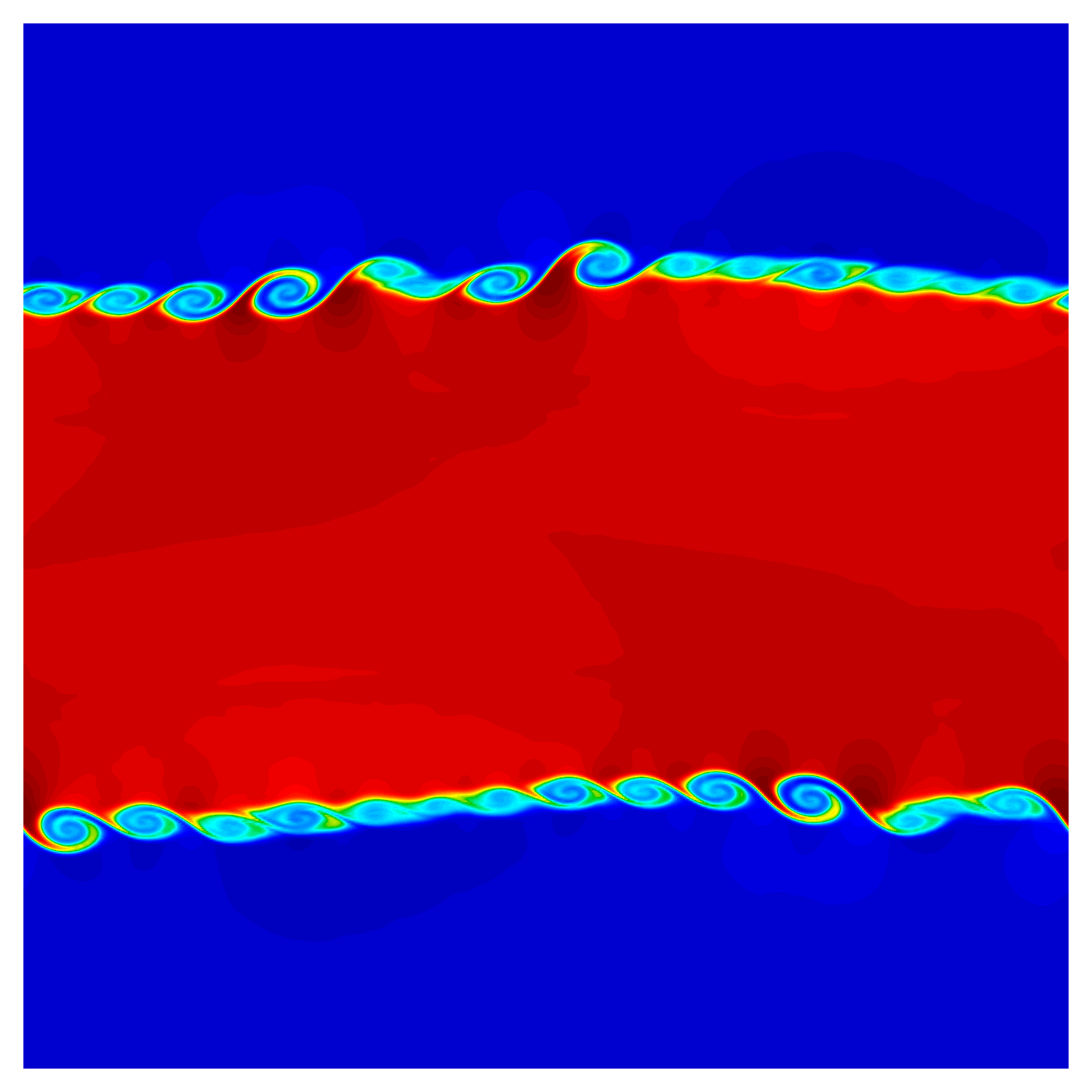}
		\caption{DG-WENO,\newline\hspace*{0.05cm} $u_h \in [0.964,2.116]$}
		\label{fig:kh1}
	\end{subfigure}
	\begin{subfigure}[b]{.32\linewidth}
		\includegraphics[width=\linewidth]{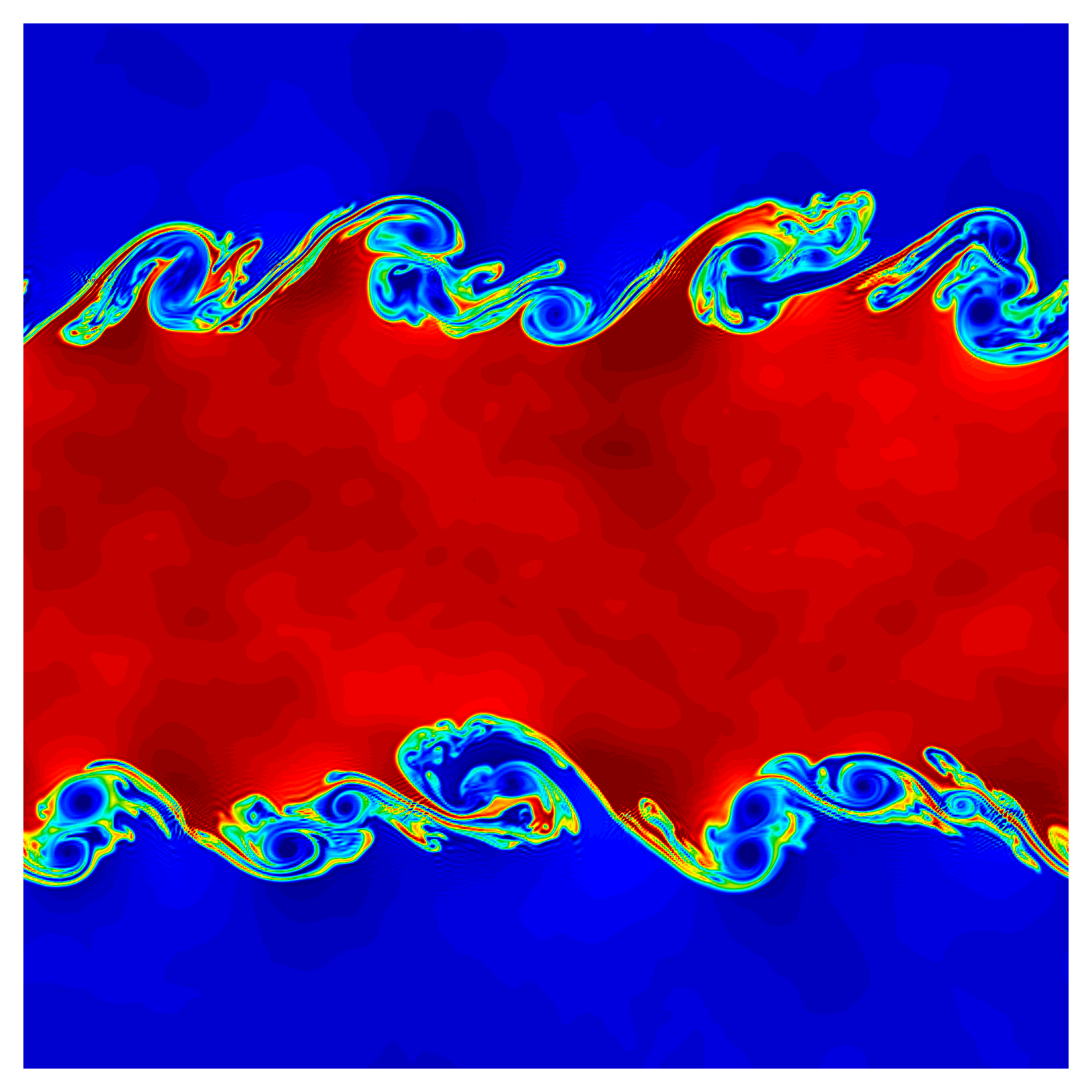}
		\caption{DG-RB-WENO-$0.1$,\newline \hspace*{-0.5cm}$u_h \in [0.183,3.005]$}
		\label{fig:kh2}
	\end{subfigure}
	\begin{subfigure}[b]{.32\linewidth}
		\includegraphics[width=\linewidth]{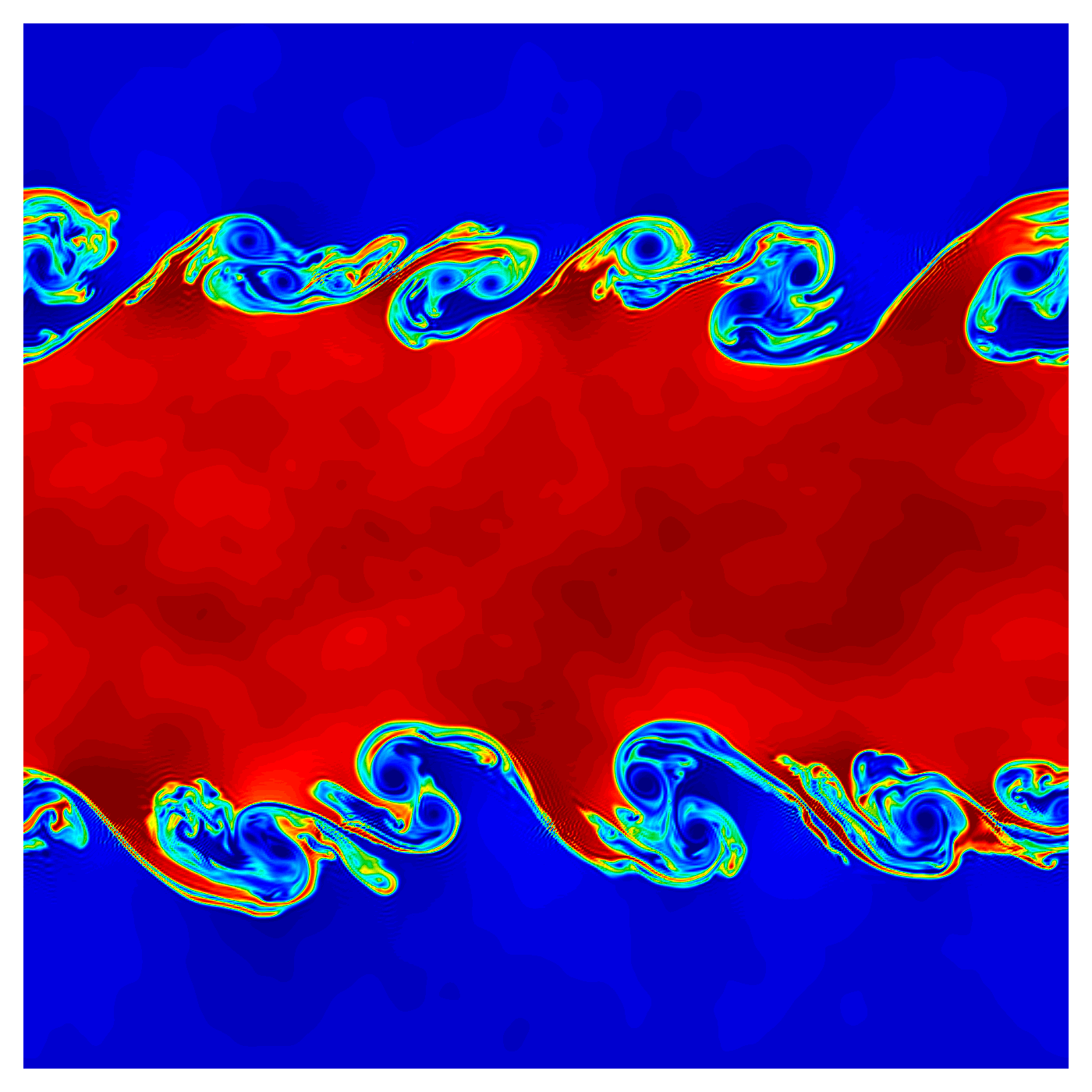}
		\caption{DG-RB-WENO-$1.0$,\newline\hspace*{-0.65cm} $u_h \in [0.232,3.304]$}
		\label{fig:kh3}
	\end{subfigure}
	\caption{Kelvin-Helmholtz instability, density profiles $\varrho$ at $t=1.0$ obtained using $E_h=512^2$ and $p=1$.}
	\label{fig:kh}
\end{figure}

\subsubsection{Double Mach reflection}
In our last numerical example, we investigate the double Mach reflection problem of Woodward and Colella \cite{woodward1984}. The flow pattern involves a propagating Mach $10$ shock in air which initially makes a $60^{\circ}$ angle with a reflecting wall. The computational domain is the rectangle $\Omega=(0,4)\times(0,1)$. The initial and boundary conditions are specified using the pre- and post-shock values as follows:
\begin{align*}
\begin{bmatrix}
\varrho_L \\v_{x,L}\\v_{y,L}\\p_L
\end{bmatrix}=
\begin{bmatrix}8.0\\\phantom{-}8.25\cos({30}^{\circ})\\-8.25\sin({30}^{\circ})\\116.5
\end{bmatrix}, \quad 
\begin{bmatrix}
\varrho_R \\v_{x,R}\\v_{y,R}\\p_R
\end{bmatrix}=
\begin{bmatrix}1.4\\0.0\\0.0\\1.0
\end{bmatrix}.
\end{align*}
Initially, the post-shock values (subscript L) are prescribed in the subdomain $\Omega_L=\{(x,y)\;|\;x<\frac{1}{6}+\frac{y}{\sqrt{3}}\}$ and the pre-shock values (subscript R) in $\Omega_R = \Omega \setminus \Omega_L$. The reflecting wall corresponds to $1/6 \leq x \leq 4$ and $y=0$. No boundary conditions are required along the line $x=4$. On the rest of the boundary, post-shock conditions are assigned for $x<\frac{1}{6}+\frac{1+20t}{\sqrt{3}}$ and pre-shock conditions elsewhere. The so-defined values along the top boundary describe the exact motion of the initial Mach $10$ shock.

The results shown in Figs \ref{fig:dmr1}-\ref{fig:dmr3} are obtained using $E_h=768\cdot192$ elements and $p=2$. Again, the DG-RB-WENO scheme demonstrates superior performance by capturing all features sharply and resolving the triple point region more accurately than the DG-WENO scheme. Moreover, it introduces sufficient numerical dissipation to suppress spurious oscillations near the shock. The generation of vortices within the triple point region is clearly visible for $\theta=1.0$, as shown in Fig. \ref{fig:dmr3}. 

\begin{figure}[!htb]
	\centering
	\begin{subfigure}[b]{.96\linewidth}
		\includegraphics[width=\linewidth]{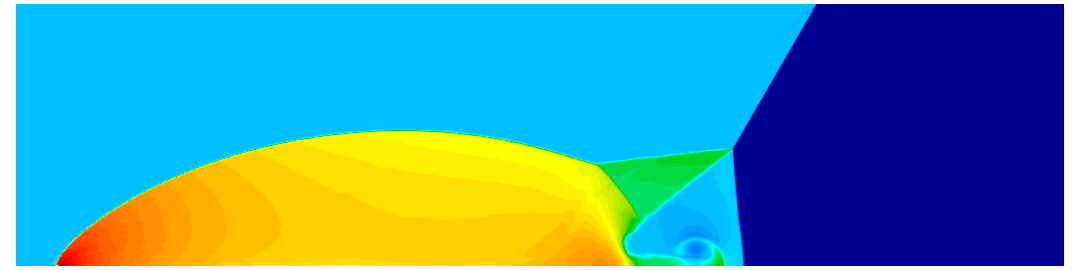}
		\caption{DG-WENO, $u_h \in [0.553,23.262]$}
		\label{fig:dmr1}
	\end{subfigure}
	\begin{subfigure}[b]{.96\linewidth}
		\includegraphics[width=\linewidth]{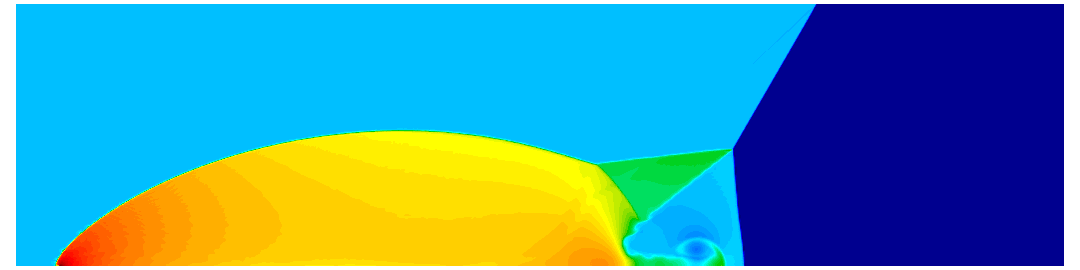}
		\caption{DG-RB-WENO-$0.1$, $u_h \in [0.757,23.700]$}
		\label{fig:dmr2}
	\end{subfigure}
	\begin{subfigure}[b]{.96\linewidth}
		\includegraphics[width=\linewidth]{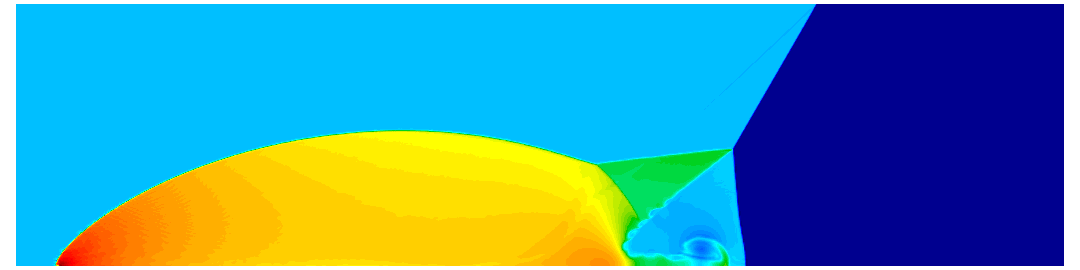}
		\caption{DG-RB-WENO-$1.0$, $u_h \in [0.526,24.501]$}
		\label{fig:dmr3}
	\end{subfigure}
	\caption{Double Mach reflection, density profiles $\varrho$ at $t=0.2$ obtained using $E_h=768\cdot192$ and $p=2$.}
	\label{fig:dmr}
\end{figure}
\section{Conclusions}
\label{sec:concl}
We presented a strongly consistent dissipation-based WENO stabilization scheme for finite element discretizations of hyperbolic conservation laws and systems thereof. The proposed methodology is suitable for CG and DG approximations alike. Following \cite{kuzmin2023a,vedral2023}, our approach utilizes piecewise-constant blending functions to combine low-order and high-order numerical dissipation operators, effectively achieving high-order accuracy while suppressing spurious oscillations both globally and locally. In our residual-based HWENO scheme, the weights for candidate polynomials in a WENO reconstruction are determined not only by the smoothness of the numerical approximation but also by the residual. We have shown how coercivity w.r.t.~a proper norm can be enforced for a particular kind of local projection stabilization. For scalar convection-diffusion-reaction equations, we proved that our scheme is stable, delivers at least one solution, and converges at the rate 1/2 in the worst case. Additionally, we presented an a posteriori criterion for achieving the optimal rate $k+1/2$. We envisage that this criterion can be used to construct local error indicators for adaptive mesh refinement purposes.

Our methodology allows for individual modification of various building blocks, such as stabilization operators, smoothness indicators, and reconstruction procedures, enabling further enhancements in accuracy for both CG and DG settings. To enhance accuracy on coarse meshes, we plan to develop smoothness sensors with a polynomial structure within each element, moving away from the piecewise constant approach. Finally, our scheme can be tailored to enforce inequality constraints, such as discrete maximum principles, by incorporating flux/slope limiters as discussed in \cite{dobrev2018,kuzmin2023,zhang2011}.

\section*{Acknowledgements}
D.\ Kuzmin and J.\ Vedral have been supported by the German Research Foundation (Deutsche
Forschungsgemeinschaft, DFG) under grant KU 1530/23-3.

A.\ Rupp has been supported by the Academy of Finland's grant number 354489 \emph{Uncertainty quantification for PDEs on hypergraphs}, grant number 359633 \emph{Localized orthogonal decomposition for high-order, hybrid finite elements}, Business Finland's project number 539/31/2023 \emph{3D-Cure: 3D printing for personalized medicine and customized drug delivery}, and the Finnish \emph{Flagship of advanced mathematics for sensing, imaging and modeling}, decision number 358944.
	
	\bibliographystyle{plain}
	\bibliography{paper_weno_analysis}
\end{document}